\documentclass{amsart}
\usepackage{amsthm}
\usepackage{amsfonts}
\usepackage{amssymb}
\usepackage{amsmath}
\usepackage{nccmath}
\usepackage{mathtools}
\usepackage{mathrsfs}
\usepackage{setspace}
\vfuzz2pt
\newtheorem{theorem}{Theorem}[section]

\newtheorem{cor}{Corollary}[section]
\newtheorem{lemma}{Lemma}[section]
\theoremstyle{definition}
\newtheorem{defn}{Definition}
\theoremstyle{definition}
\newtheorem{ex}{Example}
\theoremstyle{remark}
\newtheorem{rem}{Remark}
\theoremstyle{proposition}
\newtheorem{prop}{Proposition}[section]

\begin{document}
\title[Poletsky-Stessin Hardy Spaces inside $H^{p}(\Omega)$]
 {Poletsky-Stessin Hardy Spaces on Domains Bounded by An Analytic Jordan Curve in $\mathbb{C}$ }

\author{S\.{I}bel \c{S}ah\.{I}n }

\address{Faculty of Engineering and Natural Sciences, Sabanc{\i} University}

\email{sahinsibel@sabanciuniv.edu}

\keywords{Monge-Amp\'ere measure, Hardy Space, Jordan domain,
exhaustion}

\date{March 10, 2013}


\begin{abstract}
We study Poletsky-Stessin Hardy spaces that are generated by
continuous, subharmonic exhaustion functions on a domain
$\Omega\subset\mathbb{C}$, that is bounded by an analytic Jordan
curve. Different from Poletsky \& Stessin's work these exhaustion
functions are not necessarily harmonic outside of a compact set but
have finite Monge-Amp\'ere mass. We have showed that functions
belonging to Poletsky-Stessin Hardy spaces have a factorization
analogous to classical Hardy spaces and the algebra $A(\Omega)$ is
dense in these spaces as in the classical case ; however, contrary
to the classical Hardy spaces, composition operators with analytic
symbols on these Poletsky-Stessin Hardy spaces need not always be
bounded.
\end{abstract}

\maketitle
\section*{Introduction}
The aim of this paper is to study the behavior of the Hardy spaces
$H^{p}_{u}(\Omega)$ defined by Poletsky-Stessin in 2008, in a
setting where $\Omega$ is a domain in $\mathbb{C}$ containing $0$,
bounded by an analytic Jordan curve and $u$ is a continuous
subharmonic exhaustion function for $\Omega$ which has finite
Monge-Amp\'ere mass but not necessarily harmonic out of a compact
set. The main interest in Poletsky- Stessin's work (\cite{pol}) is
to study the behavior of these new spaces when the exhaustion
function, say $\varphi$ is harmonic out of a compact set (i.e the
measure $\triangle\varphi$ has compact support) whereas our main
focus will be on these Poletsky-Stessin Hardy spaces where the
associated exhaustion function $u$ has finite mass but the measure
$\triangle u$ does not have compact support. One of the main
consequences of this choice of exhaustion function is that the new
Hardy spaces and the classical ones do not coincide and so that we
have new Banach spaces to be explored inside the classical Hardy
spaces. Throughout this study we will discuss some properties of
classical Hardy spaces that can be generalized to this extent like
factorization and approximation also as far as the composition
operators on these Hardy spaces are concerned we will see that these
classes behave
different than the classical Hardy spaces.\\
Now we will briefly discuss the content of this paper. In Section 1
we will first give the definitions of classical spaces on $\Omega$,
namely Hardy and Hardy-Smirnov classes and then we will give the
necessary background information about the construction of the
Poletsky-Stessin Hardy spaces $H^{p}_{u}(\Omega)$. We will also
introduce Demailly's Lelong-Jensen formula which is one of the most
powerful tools that we use in our results. We will then compare all
those classes of holomorphic functions and will see that
Poletsky-Stessin Hardy spaces are not
always equal to the classical ones.\\
Main results of this study are given in the following sections, in
the first one we will characterize Poletsky-Stessin Hardy classes
$H^{p}_{u}(\Omega)$ through their boundary values and the
corresponding Monge-Amp\'ere boundary measure. This boundary value
characterization will enable us to prove factorization properties
analogous to classical case and next we will show the algebra
$A(\Omega)$ is dense in these spaces. Finally we will examine the
composition operators induced by holomorphic self maps and we will
see that even on the simplest of such domains , namely the unit
disc, not all composition operators are bounded contrary to the
classical Hardy space case.
\section* {Acknowledgments}
This paper is based on the first part of my PhD dissertation and I
would like to express my sincere gratitude to my advisor
Prof.Ayd{\i}n Aytuna for his valuable guidance, support and his
endless patience. This study would not have been possible without
his help. I would also like to thank Prof. Evgeny A.Poletsky for the
valuable discussions that we had during my visit at Syracuse
University and I would like to thank Dr. Nihat G\"{o}khan
G\"{o}\u{g}\"{u}\d{s} and Dr. Muhammed Ali Alan for their
suggestions and comments about this study.
\section {Poletsky-Stessin Hardy Spaces on Domains Bounded By An Analytic Jordan Curve}
In this section we will give the preliminary definitions and some
important results that we will use throughout this paper. Let
$\Omega$ be a domain in $\mathbb{C}$ containing $0$ and bounded by
an analytic Jordan curve. There are different classes of holomorphic
functions studied by various authors on domains in $\mathbb{C}$,
here we will mention first the classical Hardy space $H^{p}(\Omega)$
which is defined as follows:
\begin{equation}\label{eq:clashard}
H^{p}(\Omega)=\{f\in\mathcal{O}(\Omega)\quad|\quad |f|^{p}
\textit{has a harmonic majorant in}\quad\Omega \}
\end{equation}The second class we will mention is the Hardy-Smirnov class
$E^{p}(\Omega)$ : A holomorphic function $f$ on $\Omega$ is said to
be of class $E^{p}(\Omega)$ if there exists a sequence of
rectifiable Jordan curves $C_{1},C_{2},..$ in $\Omega$ tending to
boundary in the sense that $C_{n}$ eventually surrounds each compact
subdomain of $\Omega$ such that
\begin{equation*}
\int_{C_{n}}|f(z)|^{p}ds\leq M<\infty
\end{equation*}
In (\cite{pol}), Poletsky \& Stessin introduced new Hardy type
classes of holomorphic functions on a large class of domains in
$\mathbb{C}^{n}$ (see also (\cite{tez})) but our focus will be the
domains in $\mathbb{C}$ containing $0$ and bounded by an analytic
Jordan curve. Before defining these new classes let us first give
some preliminary definitions. Let $\varphi:\Omega\rightarrow
[-\infty,0)$ be a negative, continuous, subharmonic exhaustion
function for $\Omega$. Following Demailly define pseudoball:
\begin{equation*}\label{eq:pseudoball}
B(r)=\{z\in\Omega:\varphi(z)<r\}\quad ,\quad r\in[-\infty,0),
\end{equation*}
and pseudosphere:
\begin{equation*}\label{eq:pseudosphere}
S(r)=\{z\in\Omega:\varphi(z)=r\}\quad ,\quad  r\in[-\infty,0),
\end{equation*}
and set
\begin{equation*}
\varphi_{r}= \max\{\varphi,r\}\quad ,\quad r\in(-\infty,0).
\end{equation*} \\ In (\cite{de1}) Demailly introduced the Monge-Amp\'ere
measures in the sense of currents as :
\begin{equation*}\label{eq:mameasure}
\mu_{\varphi,r}=(dd^{c}\varphi_{r})-\chi_{\Omega\setminus
B(r)}(dd^{c}\varphi)\quad r\in(-\infty,0)
\end{equation*}\\These measures are supported on $S(r)$ and $dd^{c}\varphi=\frac{1}{4}\triangle\varphi
dzd\overline{z}$ in $\mathbb{C}$.\\Demailly had proved a general
formula in (\cite{de2}) called Lelong-Jensen Formula which is a
fundamental tool used in most of the results and in our setting it
is given as follows:
\begin{theorem}
Let $r<0$ and $\phi$ be a subharmonic function on $\Omega$ then for
any negative, continuous, subharmonic exhaustion function $u$
\begin{equation}\label{eq:leljen}
\int_{S_{u}(r)}\phi d\mu_{u,r}=\int_{B_{u}(r)}\phi
(dd^{c}u)+\int_{B_{u}(r)}(r-u)dd^{c}\phi
\end{equation}
\end{theorem}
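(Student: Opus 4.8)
The plan is to reduce the statement to the self-adjointness of the operator $dd^c$ under integration by parts, after rewriting the boundary measure $\mu_{u,r}$ in terms of the truncation $u_r=\max\{u,r\}$. First I would unwind the definition of $\mu_{u,r}$ to write
\begin{equation*}
\int_{S_u(r)}\phi\,d\mu_{u,r}=\int_\Omega \phi\,(dd^c u_r)-\int_{\Omega\setminus B_u(r)}\phi\,(dd^c u).
\end{equation*}
Splitting the integral of $\phi\,dd^cu$ over $\Omega$ into its parts over $B_u(r)$ and $\Omega\setminus B_u(r)$ (a clean partition, since $B_u(r)=\{u<r\}$ and $\Omega\setminus B_u(r)=\{u\ge r\}$), this becomes
\begin{equation*}
\int_{S_u(r)}\phi\,d\mu_{u,r}=\int_{B_u(r)}\phi\,(dd^c u)+\int_\Omega \phi\,\bigl(dd^c(u_r-u)\bigr).
\end{equation*}
Comparing with the asserted identity \eqref{eq:leljen}, the whole theorem reduces to the single equality
\begin{equation*}
\int_\Omega \phi\,\bigl(dd^c(u_r-u)\bigr)=\int_{B_u(r)}(r-u)\,(dd^c\phi).
\end{equation*}

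The key observation is that $u_r-u=\max\{u,r\}-u$ vanishes on $\Omega\setminus B_u(r)$ and equals the nonnegative continuous function $r-u$ on $B_u(r)$; since $u$ is an exhaustion function, $B_u(r)$ is relatively compact in $\Omega$, so $\psi:=u_r-u$ is a continuous function with compact support in $\Omega$. The plan is then to invoke the symmetry $\int_\Omega \phi\,(dd^c\psi)=\int_\Omega \psi\,(dd^c\phi)$, which for a compactly supported $\psi$ is Green's second identity: the boundary terms drop out precisely because $\psi$ and its normal derivative vanish near $\partial\Omega$. Applying this and using $\psi=(r-u)\chi_{B_u(r)}$ gives exactly the required equality and completes the argument.

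The step I expect to be the main obstacle is making this integration by parts rigorous, because $\phi$ and $u$ are only subharmonic and hence $dd^c\phi$, $dd^cu$ are measures rather than smooth forms, so Green's identity cannot be applied directly. To handle this I would approximate $\phi$ (and, if necessary, $u$) by the standard decreasing sequences of smooth subharmonic functions obtained by convolution with an approximate identity, establish the identity in the smooth case by Stokes' theorem, and then pass to the limit. The delicate points are the weak convergence of the Laplacian measures $dd^c\phi_\varepsilon\to dd^c\phi$ tested against the continuous compactly supported weight $u_r-u$, together with the convergence of the boundary measures tested against $\phi_\varepsilon$; these are controlled using the relative compactness of $B_u(r)$, the fact that $\mu_{u,r}$ is supported on the single level set $S_u(r)$, and monotone/dominated convergence for the decreasing approximants. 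Once these limits are justified, the smooth identity persists and the formula follows.
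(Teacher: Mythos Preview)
The paper does not actually prove this statement: it is quoted as Demailly's Lelong--Jensen formula with a citation to \cite{de2}, and is used throughout as a black box. So there is no ``paper's proof'' to compare against.

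Your approach is correct and is essentially the standard derivation. The reduction to the symmetry identity $\int_\Omega \phi\,dd^c\psi=\int_\Omega \psi\,dd^c\phi$ for the compactly supported continuous function $\psi=u_r-u$ is exactly the mechanism behind Demailly's argument in the one-variable (or single-function) case, and your plan to justify it by smoothing $\phi$ and invoking Stokes is the usual route. One small clean-up: in your intermediate step you pass through $\int_\Omega\phi\,dd^c u$ split over $B_u(r)$ and its complement, which can be awkward if $\phi$ has a $-\infty$ singularity making the complementary piece diverge. It is tidier to observe directly from the definition that
\[
\mu_{u,r}=dd^c u_r-\chi_{\Omega\setminus B_u(r)}\,dd^c u=dd^c(u_r-u)+\chi_{B_u(r)}\,dd^c u,
\]
since $dd^c u_r=dd^c u$ on the open set $\{u>r\}$ and $dd^c u_r=0$ on $\{u<r\}$; integrating $\phi$ against this identity gives your reduced equality immediately, with every term supported in the compact set $\overline{B_u(r)}$ and hence well defined in $[-\infty,\infty)$. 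With that adjustment your sketch is a complete proof outline.
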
One of the main results of this study is to
characterize the Poletsky-Stessin Hardy spaces through their
boundary values and for this we also need boundary measures
introduced by Demailly in (\cite{de2}). Now let
$\varphi:\Omega\rightarrow[-\infty,0)$ be a continuous subharmonic
exhaustion for $\Omega$. And suppose that the total Monge-Amp\'ere
mass is finite, i.e.
\begin{equation}
MA(\varphi)=\int_{\Omega}(dd^{c}\varphi)<\infty
\end{equation}
Then when $r$ approaches to 0, $\mu_{r}$ converge to a positive
measure $\mu$ weak*-ly on $\Omega$ with total mass
$\int_{\Omega}(dd^{c}\varphi)$ and supported on $\partial\Omega$.
One calls the measure $\mu$ as the \textbf{Monge-Amp\'ere measure on
the boundary associated with the exhaustion $\varphi$}.\\ Now we can
introduce the Poletsky-Stessin Hardy classes which will be our main
focus throughout this study , in (\cite{pol}), Poletsky \& Stessin
gave the definition of new Hardy spaces using Monge-Amp\'ere
measures as :
\begin{defn}
$H_{\varphi}^{p}(\Omega)$, $p>0$, is the space of all holomorphic
functions $f$ on $\Omega$ such for
\begin{equation*}
\limsup_{r\to 0^{-}}\int_{S_{\varphi,}(r)}|f|^{p}
d\mu_{\varphi,r}<\infty
\end{equation*}
\end{defn}
The norm on these spaces is given by:
\begin{equation*}
\|f\|_{H_{\varphi}^{p}}=\left(\lim_{r\to
0^{-}}\int_{S_{\varphi}(r)}|f|^{p}
d\mu_{\varphi,r}\right)^{\frac{1}{p}}
\end{equation*}and with respect to these norm the spaces
$H_{\varphi}^{p}(\Omega)$ are Banach spaces (\cite{pol},pg:16).
Moreover on
these Banach spaces point evaluations are continuous.\\
Now we will compare new Hardy spaces with the classical Hardy spaces
given as in (\ref{eq:clashard}):
\begin{theorem}
Let $\Omega$ be a domain in $\mathbb{C}$ containing $0$ and bounded
by an analytic Jordan curve. Suppose $\varphi$ is a continuous,
negative, subharmonic exhaustion function for $\Omega$ such that
$\varphi$ is harmonic out of a compact set $K\subset \Omega$. Then
for a holomorphic function $f\in\mathcal{O}(\Omega)$, $f\in
H^{p}_{\varphi}(\Omega)$ if and only if $|f|^{p}$ has a harmonic
majorant.
\end{theorem}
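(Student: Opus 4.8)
The plan is to feed the subharmonic function $s:=|f|^{p}$ into the Lelong-Jensen formula (\ref{eq:leljen}) and to reduce the statement to the classical fact that a nonnegative subharmonic function admits a harmonic majorant exactly when its Green potential is finite. Throughout, $s=|f|^{p}$ is subharmonic and $\geq 0$ with Riesz measure $\mu_{s}=dd^{c}s\geq 0$, and $g_{\Omega}$ denotes the Green function of $\Omega$. The decisive structural input is that $\varphi$ is harmonic off the compact set $K$, so $dd^{c}\varphi$ is supported in $K$ and has finite total mass.

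For the easy implication (harmonic majorant $\Rightarrow$ finiteness) I would take a harmonic $h\geq s\geq 0$ and apply (\ref{eq:leljen}) with $\phi=h$. Since $dd^{c}h=0$, the second term drops out and $\int_{S_{\varphi}(r)}h\,d\mu_{\varphi,r}=\int_{B_{\varphi}(r)}h\,dd^{c}\varphi$. As soon as $r$ is close enough to $0$ that $B_{\varphi}(r)\supset K$, the right-hand side equals the constant $\int_{K}h\,dd^{c}\varphi<\infty$. Positivity of $\mu_{\varphi,r}$ together with $s\leq h$ then yields $\int_{S_{\varphi}(r)}|f|^{p}\,d\mu_{\varphi,r}\leq\int_{K}h\,dd^{c}\varphi$, so the $\limsup$ is finite and $f\in H^{p}_{\varphi}(\Omega)$.

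For the hard implication (finiteness $\Rightarrow$ harmonic majorant) I would apply (\ref{eq:leljen}) with $\phi=s$:
\[
\int_{S_{\varphi}(r)}s\,d\mu_{\varphi,r}=\int_{B_{\varphi}(r)}s\,dd^{c}\varphi+\int_{B_{\varphi}(r)}(r-\varphi)\,dd^{c}s.
\]
The first term stabilizes to $\int_{K}s\,dd^{c}\varphi<\infty$ (as $|f|^{p}$ is continuous and $dd^{c}\varphi$ is compactly supported), while the second integrand is nonnegative and increases as $r\uparrow 0$; by monotone convergence the boundedness of the left-hand side is therefore equivalent to $\int_{\Omega}(-\varphi)\,d\mu_{s}<\infty$. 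To convert this into finiteness of the Green potential $P(w)=\int_{\Omega}g_{\Omega}(w,z)\,d\mu_{s}(z)$ of $s$, I would use that $\varphi$ is represented by its own Green potential, $-\varphi(z)=c\int_{K}g_{\Omega}(z,w)\,dd^{c}\varphi(w)$ for some $c>0$; this follows because $\varphi+c\,G(dd^{c}\varphi)$ is harmonic on $\Omega$, bounded, and tends to $0$ at $\partial\Omega$, hence vanishes by the maximum principle. Substituting and using Tonelli together with the symmetry of $g_{\Omega}$ gives
\[
\int_{\Omega}(-\varphi)\,d\mu_{s}=c\int_{K}\Big(\int_{\Omega}g_{\Omega}(z,w)\,d\mu_{s}(z)\Big)\,dd^{c}\varphi(w)=c\int_{K}P\,dd^{c}\varphi<\infty.
\]
Since a negative harmonic exhaustion vanishing at $\partial\Omega$ would be identically $0$, $\varphi$ is not harmonic and $dd^{c}\varphi$ carries positive mass on $K$; hence $P(w)<\infty$ for $dd^{c}\varphi$-a.e.\ $w$, so $P\not\equiv\infty$. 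By the Riesz representation this makes $s+P$ a genuine (finite) harmonic majorant of $|f|^{p}$.

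I expect the main obstacle to be entirely within the hard direction: rigorously establishing the Green-potential representation $-\varphi=c\,G(dd^{c}\varphi)$ and justifying the two limit passages as $r\to 0^{-}$ (stabilization of the $dd^{c}\varphi$-term once $B_{\varphi}(r)\supset K$, and the monotone limit of the $(r-\varphi)$-term). Once these are in place, the Tonelli interchange and the classical majorant criterion are routine.
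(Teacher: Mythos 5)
Your proposal is correct, and your easy direction (harmonic majorant $\Rightarrow$ membership) is essentially identical to the paper's: apply Lelong--Jensen to the majorant, note $dd^{c}h=0$, and use that $dd^{c}\varphi$ lives on the compact set $K$ to get an $r$-independent bound. The hard direction, however, is organized quite differently. The paper argues by contradiction: assuming $|f|^{p}$ has no harmonic majorant it invokes the divergence of the Green potential $\int_{\Omega}(-g_{\Omega}(\cdot,w))\Delta|f|^{p}$, establishes a one-sided comparison $\varphi\leq c\,g_{\Omega}(\cdot,w)$ near $\partial\Omega$ via the maximum principle on $\Omega\setminus F$, and then cites Theorem 3.1 of Poletsky--Stessin to transfer the bound on $\int_{S_{\varphi}(r)}|f|^{p}d\mu_{\varphi,r}$ to the Green exhaustion, reaching a contradiction by Fatou. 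You instead keep Lelong--Jensen applied directly to $|f|^{p}$ with the exhaustion $\varphi$, read off by monotone convergence that membership is equivalent to $\int_{\Omega}(-\varphi)\,dd^{c}|f|^{p}<\infty$, and then convert this to finiteness of the Green potential by the exact Riesz identity $-\varphi=c\,G(dd^{c}\varphi)$ (valid because $\varphi\to 0$ at the regular boundary and $dd^{c}\varphi$ is compactly supported) together with Tonelli and the symmetry of $g_{\Omega}$. Both arguments ultimately rest on the same classical criterion (harmonic majorant exists iff the Green potential of the Riesz measure is not identically infinite), but yours is a direct proof that avoids the external appeal to Poletsky--Stessin's comparison theorem and replaces the one-sided boundary estimate with an exact representation; the price is that you must justify that the Green potential of $dd^{c}\varphi$ vanishes at $\partial\Omega$ and that $dd^{c}\varphi$ has positive mass, both of which you correctly identify and dispatch. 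The two limit passages you flag (stabilization of the $dd^{c}\varphi$-term once $B_{\varphi}(r)\supset K$, and the monotone limit of $(r-\varphi)\chi_{B_{\varphi}(r)}\uparrow-\varphi$) are indeed the only analytic points needing care, and both go through as you describe.
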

\begin{proof}
Let $|f|^{p}$ has a harmonic majorant $u$ on $\Omega$. Then
\begin{equation}
\int_{S(r)}|f|^{p}d\mu_{\varphi,r}\leq\int_{S(r)}ud\mu_{\varphi,r}=\int_{B(r)}u(dd^{c}\varphi)
\end{equation}
by Lelong-Jensen formula and we know that $\varphi$ is harmonic out
of the compact set $K$ so
\begin{equation}
\int_{B(r)}u(dd^{c}\varphi)\leq\int_{K}u(dd^{c}\varphi)\leq
C_{K}\|u\|_{L^{\infty}(K)}
\end{equation} for some constant $C_{K}$ and this bound is
independent of $r$. Hence
\begin{equation}
\sup_{r<0}\int_{S(r)}|f|^{p}d\mu_{\varphi,r}\leq M<\infty
\end{equation}for some $M$\\
$\Rightarrow f\in H_{\varphi}^{p}(\Omega)$. For the converse,
suppose $f\in H_{\varphi}^{p}(\Omega)$ and $|f|^{p}$ has no harmonic
majorant. Then
\begin{equation}\label{eq:dblstar}
\frac{1}{2\pi}\int_{\Omega}(-g_{\Omega}(z,w))\Delta|f|^{p}=\infty
\end{equation}where $g_{\Omega}(z,w)$ is the Green function of the
domain $\Omega$ (\cite{ran}). Then from Lelong-Jensen formula
\begin{equation*}
\frac{1}{2\pi}\int_{S(r)}|f|^{p}d\mu_{\varphi,r}\geq\frac{1}{2\pi}\int_{B(r)}(r-\varphi)\Delta|f|^{p}
\end{equation*}
Note that left hand side is bounded independent from $r$ since $f\in
H_{\varphi}^{p}(\Omega)$.\\ Let us take a compact set
$F\subset\Omega$ containing the support of $\triangle\varphi$ and
$\{w\}$ such that both $\varphi$ and $g_{\Omega}(z,w)$ are bounded
on $\partial F$ and $bg_{\Omega}(z,w)\leq\varphi\leq
cg_{\Omega}(z,w)$ holds on $\partial F$ for some numbers $b,c>0$. By
the maximality of both $\varphi$ and $g_{\Omega}(z,w)$ on
$\Omega\setminus F$ this inequality holds on $\Omega\setminus F$
also hence near boundary we have
\begin{equation*}
\varphi\leq cg_{\Omega}(z,w)
\end{equation*}Then by Theorem 3.1 of (\cite{pol}, pg:11) for a positive constant $a>0$ we have
the following
\begin{equation*}
\frac{1}{2\pi}\int_{B(ar)}(ar-g_{\Omega}(z,w))\Delta|f|^{p}\leq
\frac{1}{2\pi}\int_{S(ar)}|f|^{p}d\mu_{g,r}\leq
\frac{1}{2\pi}\int_{S(r)}|f|^{p}d\mu_{\varphi,r}
\end{equation*} and as $r\rightarrow0$ by Fatou lemma we have
\begin{equation*}
\int_{\mathbb{D}}(-g_{\Omega}(z,w))\Delta|f|^{p}\leq\lim_{r\rightarrow0}\frac{1}{2\pi}\int_{B(ar)}(ar-g_{\Omega}(z,w))\Delta|f|^{p}\leq\lim_{r\rightarrow0}\frac{1}{2\pi}\int_{S(r)}|f|^{p}d\mu_{\varphi,r}
\end{equation*}
the last limit is bounded since $f\in H_{\varphi}^{p}(\Omega)$ but
the first integral goes to infinity by (\ref{eq:dblstar}). This
contradiction proves the assertion therefore $|f|^{p}$ has a
harmonic majorant.
\end{proof}
\begin{cor}
Let $\Omega$ be a domain in $\mathbb{C}$ containing $0$ and bounded
by an analytic Jordan curve. Suppose $g_{\Omega}(z,w)$ is the Green
function of $\Omega$ with the logarithmic pole at $w\in \Omega$.
Then $H^{p}(\Omega)=H^{p}_{g_{\Omega}}(\Omega)$ for $p\geq1$.
\end{cor}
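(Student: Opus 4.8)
The plan is to obtain this corollary as a direct application of the preceding theorem, by checking that the Green function $g_{\Omega}(\cdot,w)$ satisfies all of its hypotheses. Writing $\varphi:=g_{\Omega}(\cdot,w)$, I would first confirm that $\varphi$ is a continuous, negative, subharmonic exhaustion function for $\Omega$. Negativity and the boundary value $0$ are built into the definition of the Green function (in the sign convention used in the proof of the theorem, where $-g_{\Omega}\geq 0$), and since $\Omega$ is bounded by an analytic Jordan curve it is a regular Jordan domain; hence $\varphi$ extends continuously to $\overline{\Omega}$ with $\varphi\equiv 0$ on $\partial\Omega$ and attains the value $-\infty$ only at the pole $w$, giving a continuous map $\Omega\to[-\infty,0)$. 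Because $\varphi\to 0$ at the boundary, each sublevel set $\{z:\varphi(z)<r\}$ with $r<0$ is relatively compact in $\Omega$, so $\varphi$ is genuinely an exhaustion.

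Next I would verify the structural hypothesis that $\varphi$ is harmonic outside a compact subset of $\Omega$. This is immediate from the defining property of the Green function: $g_{\Omega}(\cdot,w)$ is harmonic on $\Omega\setminus\{w\}$ with a logarithmic pole at $w$, so the measure $dd^{c}\varphi$ is a point mass at $w$. In particular $\mathrm{supp}(dd^{c}\varphi)=\{w\}$ is a compact subset of $\Omega$ and the total Monge-Amp\'ere mass is finite, while the local behavior $\varphi\sim\log|z-w|$ near $w$ confirms subharmonicity there. Thus $\varphi$ is subharmonic on all of $\Omega$ and harmonic off the compact set $\{w\}$.

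With these checks in place, the preceding theorem applies verbatim with the exhaustion $\varphi=g_{\Omega}(\cdot,w)$: for $f\in\mathcal{O}(\Omega)$ one has $f\in H^{p}_{g_{\Omega}}(\Omega)$ if and only if $|f|^{p}$ admits a harmonic majorant. By the defining condition (\ref{eq:clashard}) of the classical Hardy space, the latter is precisely the statement $f\in H^{p}(\Omega)$, which yields the equality $H^{p}(\Omega)=H^{p}_{g_{\Omega}}(\Omega)$ as sets.

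Finally, to read this as an equality of Banach spaces for $p\geq 1$ I would argue by soft functional analysis. For $p\geq 1$ both $H^{p}(\Omega)$ and $H^{p}_{g_{\Omega}}(\Omega)$ are Banach spaces on which point evaluations are continuous; since they coincide as sets, a standard closed-graph argument—using that norm convergence in either space forces pointwise convergence—shows the identity map between them is bounded, and the open mapping theorem then gives equivalence of the two norms. The only point demanding genuine care is the boundary regularity of $g_{\Omega}$, namely its continuity up to $\partial\Omega$ and the properness of its sublevel sets, which is exactly where the analyticity (hence regularity) of the bounding Jordan curve enters; the remainder is a direct substitution into the theorem.
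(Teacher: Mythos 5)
Your argument is correct and follows essentially the same route as the paper: verify that $g_{\Omega}(\cdot,w)$ is a continuous, negative, subharmonic exhaustion that is harmonic off the compact set $\{w\}$, then invoke Theorem 1.1 to identify membership in $H^{p}_{g_{\Omega}}(\Omega)$ with the existence of a harmonic majorant for $|f|^{p}$, which is the definition of $H^{p}(\Omega)$. The paper states this more tersely (it does not spell out the exhaustion checks or the norm-equivalence remark), but the substance is identical.
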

\begin{proof}
The Green function $g_{\Omega}(z,w)$ is harmonic out of the compact
set $\{w\}$ by definition so by previous theorem we have $f\in
H^{p}_{g_{\Omega}}(\Omega)$ is equivalent to the condition that
$|f|^{p}$ has a harmonic majorant which is by definition means $f\in
H^{p}(\Omega)$.
\end{proof}
Now we would like to compare Poletsky-Stessin Hardy classes with the
classical Hardy and Hardy-Smirnov spaces when $\Omega$ is a domain
in $\mathbb{C}$ that is bounded by an analytic Jordan curve. First
of all on $\Omega$ we have $E^{p}(\Omega)=H^{p}(\Omega)$ by (Theorem
10.2,\cite{dur}) and using Theorem 1.1 we see that for any
exhaustion function $\varphi$ which is harmonic out of a compact set
we have $E^{p}(\Omega)=H^{p}(\Omega)=H^{p}_{\varphi}(\Omega)$.
However this need \textbf{not} be the case when our exhaustion
function $u$ has finite Monge-Amp\'ere mass yet not harmonic out of
a compact set. When an exhaustion function $u$ has finite
Monge-Amp\'ere mass then by (Theorem 3.1, \cite{pol}) we have
$H^{p}_{u}(\Omega)\subset H^{p}_{g_{\Omega}}(\Omega)=H^{p}(\Omega)$
where the last equality is given in Corollary 1.1 however by
explicitly constructing an exhaustion function $u$ on the unit disc
$\mathbb{D}$, we will show that $H^{p}_{u}(\Omega)$ need not be
equal to $H^{p}(\Omega)$ :
\begin{theorem}
There exists an exhaustion function $u$ with finite Monge-Amp\'ere
mass such that the Hardy space $H_{u}^{p}(\mathbb{D})\subsetneqq
H^{p}(\mathbb{D})$.
\end{theorem}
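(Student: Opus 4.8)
The plan is to exhibit a single exhaustion function $u$ whose Monge-Amp\'ere mass is finite but concentrated so strongly near one boundary point that some function of $H^{p}(\mathbb D)$ fails to have finite $H^{p}_{u}$-norm. I take the distinguished boundary point to be $z=1$, and I first reduce the whole problem to a divergent integral against $dd^{c}u$ by means of the Lelong-Jensen formula (\ref{eq:leljen}). Fix a holomorphic $f$; since $|f|^{p}$ is subharmonic, (\ref{eq:leljen}) gives for each $r<0$
\[
\int_{S_{u}(r)}|f|^{p}\,d\mu_{u,r}=\int_{B_{u}(r)}|f|^{p}\,(dd^{c}u)+\int_{B_{u}(r)}(r-u)\,dd^{c}|f|^{p}\ \ge\ \int_{B_{u}(r)}|f|^{p}\,(dd^{c}u),
\]
both integrands being nonnegative. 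Letting $r\to0^{-}$ and using $B_{u}(r)\uparrow\mathbb D$, the last integral increases to $\int_{\mathbb D}|f|^{p}\,(dd^{c}u)$, so $\limsup_{r\to0^{-}}\int_{S_{u}(r)}|f|^{p}\,d\mu_{u,r}\ge\int_{\mathbb D}|f|^{p}\,(dd^{c}u)$. Hence it suffices to produce $u$ and $f\in H^{p}(\mathbb D)$ with $\int_{\mathbb D}|f|^{p}\,dd^{c}u=\infty$: then $f\notin H^{p}_{u}(\mathbb D)$, while the inclusion $H^{p}_{u}(\mathbb D)\subset H^{p}(\mathbb D)$ for finite-mass exhaustions (recorded just before the statement) makes the inclusion proper.

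For the exhaustion I would set $u=u_{0}+w$, where $u_{0}(z)=|z|^{2}-1$ is the standard smooth exhaustion, whose $dd^{c}u_{0}$ is a constant multiple of area measure, and $w$ is the Green potential $w(z)=\int_{\mathbb D}g_{\mathbb D}(z,\zeta)\,\rho(\zeta)\,dA(\zeta)\le0$ of the density $\rho(z)=\chi(z)\,|1-z|^{-\beta}$, with $\chi$ a smooth cut-off equal to $1$ near $z=1$ and with $1<\beta<2$. Then $dd^{c}w=c\,\rho\,dA$, so $dd^{c}u=dd^{c}u_{0}+c\,\rho\,dA$ has finite total mass since $\int_{\mathbb D}|1-z|^{-\beta}\,dA<\infty$ for $\beta<2$. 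As $u_{0}<0$ and $w\le0$, the function $u$ is negative and subharmonic; the point is that $w$ extends continuously to $\overline{\mathbb D}$ with boundary value $0$, whence $u$ is continuous on $\overline{\mathbb D}$ with $u|_{\partial\mathbb D}=0$ and therefore $\{u<r\}\Subset\mathbb D$ for each $r<0$, i.e. $u$ is a genuine exhaustion with finite Monge-Amp\'ere mass.

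For the test function I take $f(z)=(1-z)^{-\gamma}$ with $\gamma p<1$, so that $f\in H^{p}(\mathbb D)$ by the classical criterion and $|f(z)|^{p}=|1-z|^{-\gamma p}$. Near $z=1$ one has $\int|1-z|^{-\gamma p}\rho\,dA\asymp\int_{0}t^{1-\gamma p-\beta}\,dt$ (writing $t=|1-z|$, $dA\asymp t\,dt\,d\psi$), which diverges precisely when $\gamma p+\beta\ge2$. Choosing for instance $\beta=\tfrac32$ and $\gamma p=\tfrac34$ satisfies all three constraints at once ($\beta<2$ for finite mass, $\gamma p<1$ for $f\in H^{p}$, and $\gamma p+\beta\ge2$ for divergence), so $\int_{\mathbb D}|f|^{p}\,dd^{c}u=\infty$ and the reduction of the first paragraph gives $f\in H^{p}(\mathbb D)\setminus H^{p}_{u}(\mathbb D)$.

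The main obstacle I anticipate is the continuity claim in the second step: verifying that the Green potential $w$ of the boundary-concentrated density $\rho$ is continuous up to $\partial\mathbb D$ and, in particular, vanishes at the singular point $z=1$, since this is exactly what secures the exhaustion property. I expect this to follow from the mass estimate $\int_{|1-z|<t}\rho\,dA\lesssim t^{1/2}$ (a consequence of $\beta<2$), which yields H\"older continuity of $w$ at $z=1$; away from $z=1$ the potential is continuous with vanishing boundary values by standard facts about Green potentials of finite measures. Once this is in place, the balancing of the exponents $\beta$ and $\gamma p$ is elementary, and the Lelong-Jensen reduction makes the rest automatic.
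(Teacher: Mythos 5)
Your argument is correct, and its skeleton is the same as the paper's: reduce everything through the Lelong--Jensen formula to the divergence of $\int_{\mathbb D}|f|^{p}\,dd^{c}u$, concentrate the Monge--Amp\'ere mass of $u$ at the single boundary point $z=1$, and test against a power singularity $(1-z)^{-\gamma}$ with $\gamma p<1$ (the paper uses $F(z)=(1-z)^{-2q}$ with $\tfrac14<q<\tfrac12$ and the same Fatou/monotone-convergence passage to the limit). Where you genuinely diverge is in how the exhaustion is manufactured. The paper takes $u=f-\rho$ with $f(z)=-(1-x)^{3/4}$ and $\rho$ its harmonic (Poisson) extension from the boundary: then $\triangle u=\triangle f\asymp(1-x)^{-5/4}$ is explicit, and continuity of $u$ on $\overline{\mathbb D}$, negativity, and the vanishing of $u$ on $\partial\mathbb D$ (hence the exhaustion property) are all immediate from the explicit formula. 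You instead prescribe the density $|1-z|^{-\beta}$ directly and take its Green potential $w$, adding $|z|^{2}-1$ to guarantee strict negativity. This is more flexible -- you can dial in any admissible mass concentration without first guessing a closed-form subharmonic function realizing it -- but it transfers the entire burden onto the one point you yourself flag: continuity of $w$ up to $\partial\mathbb D$ with boundary value $0$, which is what makes $u$ an exhaustion at all. That step is true and standard (your density lies in $L^{p}$ near $z=1$ for $1<p<\tfrac43$ since $\beta=\tfrac32<2$, and logarithmic potentials of $L^{p}$ densities with $p>1$ are continuous; alternatively the uniform growth bound $\int_{B(x,t)}\rho\,dA\lesssim t^{1/2}$ gives the Dini condition), but it is the only place where your write-up is a sketch rather than a proof, and it is precisely the step the paper's ``subharmonic function minus its Poisson extension'' device renders trivial. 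Your exponent bookkeeping ($\beta<2$ for finite mass, $\gamma p<1$ for membership in $H^{p}$, $\gamma p+\beta\ge2$ for divergence, satisfied by $\beta=\tfrac32$, $\gamma p=\tfrac34$) is consistent and plays the role of the paper's conditions $q<\tfrac12$ and $q+\tfrac34\ge1$.
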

\begin{proof}
(Without loss of generality assume p=1) In order to prove this
result we will first construct an exhaustion function $u$ with
finite mass:\\
Let $\mathbb{D}$ be the unit disc in $\mathbb{C}$, and let $\rho$ be
the solution of the Dirichlet problem in the unit disc such that
$\triangle \rho=0$ in $\mathbb{D}$ and $\rho=f$ on
$\partial\mathbb{D}$ where $f(z)=-(1-x)^{\frac{3}{4}}$. Then define
$u=f-\rho$. We will first show that $u$ is a continuous,
subharmonic, exhaustion
function for $\mathbb{D}$:\\
$u$ is continuous:
$u(z)=f(z)+\int_{\partial\mathbb{D}}P(z,e^{i\theta})(1-\cos\theta)^{\frac{3}{4}}d\theta$
  and both parts on the right hand side are in $C^{2}(\mathbb{D})\cap
  C(\overline{\mathbb{D}})$ hence $u$ is a continuous function.\\
  $u$ is subharmonic: $u$ is a $C^{2}(\mathbb{D})$
  function and $\triangle u=\triangle(-(1-x)^{\frac{3}{4}})=(1-x)^{\frac{-5}{4}}\geq0$ hence $u$ is
  subharmonic.\\
  $u$ is an exhaustion: For this we should show that
  $A_{c}=\{x\mid u(x)<c\}$ is relatively compact in $\mathbb{D}$ for all
  $c<0$. Suppose not then there exists a sequence $x_{n}$ in $A_{c}$
  such that it has a subsequence $x_{n_{k}}$ converging to boundary
  of $\mathbb{D}$. Now $x_{n_{k}}\rightarrow x$, $|x|=1$ and $u$ is
  continuous so $u(x_{n_{k}})\rightarrow u(x)$ but $u(x_{n_{k}})<c$
  so $u(x)<c$. This contradicts the fact that $u=0$ on the boundary.
  Hence $A_{c}$ is relatively compact in $\mathbb{D}$ for all $c$
  and $u$ is an exhaustion.\\
Since we have a negative, continuous, subharmonic exhaustion
function in $\mathbb{D}$ we can define Monge-Amp\'ere measure
$\mu_{u}$ associated with $u$ and we will show that total mass
,$\|\mu_{u}\|$, of $\mu_{u}$ is finite :
\begin{equation*}
\|\mu_{u}\|=\int_{\mathbb{D}}dd^{c}u=\int_{-1}^{1}\int_{-\sqrt{(1-x)(1+x)}}^{\sqrt{(1-x)(1+x)}}(1-x)^{-\frac{5}{4}}dydx=\int_{-1}^{1}2(1+x)^{\frac{1}{2}}(1-x)^{-\frac{3}{4}}dx
\end{equation*}
\begin{equation*}
\leq2\sqrt{2}\int_{-1}^{1}(1-x)^{-\frac{3}{4}}dx
\end{equation*} say $t=1-x$ then
\begin{equation*}
=2\sqrt{2}\int_{0}^{2}\frac{1}{t^{\frac{3}{4}}}dt<\infty
\end{equation*}
Hence $\mu_{u}$ has finite mass.\\We know that for any continuous,
negative, subharmonic exhaustion function $u$ the Hardy space
$H_{u}^{p}(\mathbb{D})\subset H^{1}(\mathbb{D})$ (\cite{pol},pg:13)
but now we will show that the converse is not true by using the
$u(z)$ that we constructed above as exhaustion. Take
$F(z)=\frac{1}{(1-z)^{2q}}\quad \text{for}\quad
q<\frac{1}{2}$\\
First of all we want to show that $F(z)\in H^{1}(\mathbb{D})$ so we
will show the following growth condition is satisfied:
\begin{equation*}
\sup_{0<r<1}\int_{0}^{2\pi}|F(re^{i\theta}|d\theta<\infty\quad
\text{for}\quad 2q<1
\end{equation*}
\begin{equation*}
|F(z)|=\frac{1}{|1-re^{i\theta}|^{2q}}=\frac{1}{(1+r^{2}-2Re(re^{i\theta}))^{q}}=\frac{1}{(1+r^{2}-2r\cos(\theta))^{q}}
\end{equation*}
Now,
\begin{equation*}
\int_{0}^{2\pi}\frac{1}{(1+r^{2}-2r\cos(\theta))^{q}}d\theta=\int_{0}^{2\pi}\frac{1}{(\sin^{2}\theta+(\cos\theta-r)^{2})^{q}}d\theta\leq\int_{0}^{2\pi}\frac{1}{(\cos\theta-r)^{2q}}d\theta
\end{equation*} and as $r\rightarrow1$ this integral converges for
$2q<1$ and
\begin{equation*}
\sup_{0<r<1}\int_{0}^{2\pi}|F(re^{i\theta}|d\theta<\infty\quad
\text{for}\quad q<\frac{1}{2}
\end{equation*}
Now we will show that $F(z)\notin H^{1}_{u}(\mathbb{D})$ :
\begin{equation*}
|F(z)|=\frac{1}{|1-z|^{2q}}=\frac{1}{((1-x)^{2}+y^{2})^{q}}=((1-x)^{2}+y^{2})^{-q}
\end{equation*}from the Lelong-Jensen formula,
\begin{equation*}
\int_{S(r)}|F(z)|d\mu_{u,r}=\int_{B(r)}|F(z)|\triangle
u+\int_{B(r)}(r-u)\triangle|F(z)|
\end{equation*}second integral on the right hand side is
non-negative so
\begin{equation*}
\int_{S(r)}|F(z)|d\mu_{u,r}\geq\int_{B(r)}|F(z)|\triangle u
\end{equation*} now as $r\rightarrow0$
\begin{equation*}
\|F(z)\|_{H^{1}_{u}}\geq\lim_{r\rightarrow0}\int_{B(r)}|F(z)|\triangle
u\geq\int_{\mathbb{D}}|F(z)|\triangle u
\end{equation*} where the last inequality is due to Fatou lemma and
on $\mathbb{D}$, $y^{2}\leq1-x^{2}\leq2(1-x)$ so
\begin{equation*}
\|F(z)\|_{H^{1}_{u}}\geq\int_{\mathbb{D}}|F(z)|\triangle
u=\int_{-1}^{1}\int_{-\sqrt{(1-x)(1+x)}}^{\sqrt{(1-x)(1+x)}}((1-x)^{2}+y^{2})^{-q}(1-x)^{-\frac{5}{4}}dydx
\end{equation*}
\begin{equation*}
\geq\int_{-1}^{1}\int_{-\sqrt{(1-x)(1+x)}}^{\sqrt{(1-x)(1+x)}}((1-x)^{2}+2(1-x))^{-q}(1-x)^{-\frac{5}{4}}dydx
\end{equation*}
\begin{equation*}
\geq\int_{-1}^{1}(1-x)^{\frac{1}{2}}(1-x)^{-q}(3-x)^{-q}(1-x)^{-\frac{5}{4}}dx
\end{equation*}
\begin{equation*}
\geq\int_{-1}^{1}\frac{1}{(1-x)^{q+\frac{3}{4}}}dx
\end{equation*} so for $q>\frac{1}{4}$ this integral diverges. Hence for $\frac{1}{4}<q<\frac{1}{2}\quad F(z)\in H^{1}(\mathbb{D})$
but $F(z)\notin H^{1}_{u}(\mathbb{D})$
\end{proof}
\begin{rem}
Moreover from this result we also deduce that if $u$ is an arbitrary
exhaustion function with finite Monge-Amp\'ere mass
$H^{p}_{u}(\Omega)$'s are not always closed subspaces of
$H^{p}(\Omega)$ because as Banach spaces the inclusion
$H^{p}_{u}(\Omega)\hookrightarrow H^{p}(\Omega)$ is continuous which
can be deduced from Closed Graph Theorem and the fact that point
evaluations are continuous (\cite{pol}). However the range is not
closed because range includes all bounded functions (hence
polynomials) and polynomials are dense in $H^{p}(\Omega)$ but
$H^{p}(\Omega)\neq H^{p}_{u}(\Omega)$ in general.
\end{rem}
\section {Boundary Monge-Amp\'ere Measure \& Boundary Value Characterization of Poletsky-Stessin Hardy Spaces}
Let $\Omega$ be a domain in $\mathbb{C}$ containing $0$ and bounded
by an analytic Jordan curve and $u$ be a continuous, negative,
subharmonic exhaustion function for $\Omega$ with finite
Monge-Amp\'ere mass. In the classical Hardy space theory on the unit
disc $\mathbb{D}$ we can characterize the $H^{p}$ spaces through
their boundary values inside the $L^{p}$ spaces of the unit circle
and since we have $H^{p}_{u}(\Omega)\subset H^{p}(\Omega)$, any
holomorphic function $f\in H^{p}_{u}(\Omega)$ has the boundary value
function $f^{*}$ from the classical theory (Theorem 10,
\cite{stein}). In this section we will give an analogous
characterization of the Poletsky-Stessin Hardy spaces through these
boundary value functions and boundary Monge-Amp\'ere measure.
Moreover we will show the relation between boundary Monge-Amp\'ere
measure and Euclidean measure on the boundary $\partial\Omega$.
\begin{prop}
Let $u$ be a continuous, negative, subharmonic exhaustion function
for $\Omega$ with finite Monge-Amp\'ere mass then the boundary
Monge-Amp\'ere measure $\mu_{u}$ and the Euclidean measure on
$\partial\Omega$ are mutually absolutely continuous.
\end{prop}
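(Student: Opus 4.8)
The plan is to prove mutual absolute continuity by establishing two-sided control between the boundary Monge-Ampère measure $\mu_u$ and arclength measure $ds$ on $\partial\Omega$, using the Lelong-Jensen formula together with the analyticity of the boundary curve. First I would reduce to a model situation. Since $\Omega$ is bounded by an analytic Jordan curve, there is a conformal map $\Phi$ from a neighborhood of $\overline{\mathbb{D}}$ onto a neighborhood of $\overline{\Omega}$ carrying $\mathbb{D}$ to $\Omega$; the analyticity of the boundary guarantees that $\Phi$ extends analytically across $\partial\mathbb{D}$ with $\Phi'$ bounded and bounded away from $0$ on $\partial\mathbb{D}$. Because conformal maps preserve the class of subharmonic functions and transform the Laplacian (hence $dd^c$) by the conformal factor $|\Phi'|^2$, and because $ds_{\partial\Omega} = |\Phi'|\,ds_{\partial\mathbb{D}}$ with $|\Phi'|$ bounded above and below, it suffices to prove the statement on the unit disc $\mathbb{D}$ for the pulled-back exhaustion function. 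So I would assume $\Omega = \mathbb{D}$ from here on.

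Next I would compute $\mu_u$ directly on $\mathbb{D}$ using Lelong-Jensen. Applying the formula \eqref{eq:leljen} with test function $\phi$ and letting $r \to 0^-$, the boundary measure $\mu_u$ is characterized by
\begin{equation*}
\int_{\partial\mathbb{D}} \phi\, d\mu_u = \int_{\mathbb{D}} \phi\,(dd^c u) + \int_{\mathbb{D}} (-u)\, dd^c\phi,
\end{equation*}
valid for, say, continuous subharmonic $\phi$ extending continuously to $\overline{\mathbb{D}}$. To extract $\mu_u$ explicitly I would take $\phi$ to be harmonic (so the first term on the right vanishes and $dd^c\phi$ controls nothing), or better, test against the Poisson kernel: for $\phi(z) = P(z, e^{i\theta})$ the harmonic extension of a boundary point mass, the identity reduces to an expression of $\mu_u$ in terms of the harmonic extension of $(-u)$ and its normal derivative. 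The cleanest route is to recognize that when $u$ is harmonic near the boundary (or after an approximation reducing to this case), $\mu_u$ coincides with the measure $-\frac{\partial G}{\partial n}\,$-type data, i.e. $d\mu_u = \frac{\partial (-u)}{\partial n}\big|_{\partial\mathbb{D}}\, ds$ up to a normalizing constant, where $\partial/\partial n$ is the inward normal derivative. The density $h(\theta) := \frac{\partial(-u)}{\partial n}$ is then the Radon-Nikodym derivative $d\mu_u/ds$.

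The two absolute continuity statements then reduce to showing $0 < h(\theta) < \infty$ for ($ds$-almost) every boundary point, which forces $\mu_u \ll ds$ and $ds \ll \mu_u$ simultaneously. That $h \geq 0$ and $\mu_u \ll ds$ follows because $\mu_u$ has a density against $ds$ by the representation above. The reverse direction, $ds \ll \mu_u$, requires $h(\theta) > 0$ almost everywhere: since $-u$ is a nonnegative superharmonic-type quantity vanishing on the boundary and strictly positive inside (as $u$ is a negative exhaustion, so $u < 0$ in $\mathbb{D}$ and $u \to 0$ at $\partial\mathbb{D}$), the inward normal derivative of $-u$ is strictly positive wherever the boundary is regular enough, which the analyticity of $\partial\mathbb{D}$ supplies via the Hopf lemma.

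I expect the main obstacle to be the rigorous justification that $d\mu_u/ds$ is strictly positive almost everywhere \emph{without} assuming $u$ is harmonic near the boundary, since the defining feature of this paper is that $dd^c u$ need not have compact support. The danger is that the mass of $dd^c u$ may concentrate in a way that makes the normal-derivative heuristic fail on a boundary set of positive measure. To handle this I would argue that the first term $\int_{\mathbb{D}} P(\cdot,e^{i\theta})\,(dd^c u)$ in the Lelong-Jensen representation is itself a positive measure that is mutually absolutely continuous with $ds$ (the Poisson kernel being strictly positive and locally bounded below on compact subsets of $\mathbb{D}$, its sweep of any nonzero positive interior mass spreads to a density that is bounded below on $\partial\mathbb{D}$), so that even the interior-mass contribution alone already guarantees $h(\theta) > 0$ everywhere. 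Combined with the finite total mass $MA(u) < \infty$ giving $h \in L^1(ds)$, this yields $0 < h < \infty$ a.e., establishing mutual absolute continuity.
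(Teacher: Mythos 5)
Your proposal is essentially correct, and its decisive step --- testing the Lelong--Jensen identity against the Poisson kernel to identify the density of $\mu_{u}$ as the sweep $\beta(\xi)=\int_{\Omega}P(z,\xi)\,dd^{c}u(z)$, which lies in $L^{1}(d\sigma)$ by Fubini and $\int_{\partial\Omega}P(z,\xi)\,d\sigma(\xi)=1$, and is strictly positive because the Poisson kernel is --- is exactly the paper's proof. The differences are in the scaffolding you erect around it. The conformal reduction to $\mathbb{D}$ is harmless but unnecessary: the paper works directly with the Poisson kernel of $\Omega$ and never needs the boundary parametrization. The middle portion of your argument, where you represent $d\mu_{u}$ as $\frac{\partial(-u)}{\partial n}\,ds$ and invoke the Hopf lemma, is a detour that you yourself correctly flag as fragile when $dd^{c}u$ does not have compact support; your final paragraph repairs it by reverting to the Poisson-kernel representation, so nothing is lost, but the Hopf-lemma machinery could be deleted entirely. (If you do want to keep the normal-derivative picture, note that it is the Riesz representation $u=\int g_{\Omega}(\cdot,w)\,dd^{c}u(w)$ that makes it rigorous, and differentiating the Green function in the normal direction returns you to the Poisson kernel anyway.) One small slip: for harmonic $\phi$ it is the \emph{second} term $\int_{\Omega}(-u)\,dd^{c}\phi$ that vanishes in your displayed identity, not the first. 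Finally, a point in your favor: the paper stops at $d\mu_{u}=\beta\,d\sigma$ and does not explicitly argue the reverse direction $d\sigma\ll d\mu_{u}$, whereas you address it; the clean justification is that $dd^{c}u$ is a nonzero positive measure (a harmonic negative exhaustion tending to $0$ at $\partial\Omega$ would be identically $0$), so $\beta(\xi)\geq\inf_{z\in K}P(z,\xi)\,(dd^{c}u)(K)>0$ for any compact $K$ carrying positive mass.
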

\begin{proof}
Let $\varphi$ be a continuous function on $\partial\Omega$ and let
the Poisson integral of $\varphi$ be
\begin{equation*}
H(z)=\int_{\partial\Omega}P(z,\xi)\varphi(\xi)d\sigma(\xi)
\end{equation*}then by Lelong-Jensen formula we have
\begin{equation*}
\int_{\partial\Omega}\varphi
d\mu_{u}=\int_{\Omega}H(z)dd^{c}u=\int_{\Omega}\int_{\partial\Omega}P(z,\xi)\varphi(\xi)d\sigma(\xi)dd^{c}u
\end{equation*}and since $\varphi$ is a continuous function on the
boundary and $\mu_{u}$ has finite mass we can use Fubini theorem to
get
\begin{equation*}
\int_{\partial\Omega}\varphi
d\mu_{u}=\int_{\Omega}\int_{\partial\Omega}P(z,\xi)\varphi(\xi)d\sigma(\xi)dd^{c}u=\int_{\partial\Omega}\varphi(\xi)\left(\int_{\Omega}P(z,\xi)dd^{c}u(z)\right)d\sigma(\xi)
\end{equation*}
Now define
\begin{equation*}
\beta(\xi)=\int_{\Omega}P(z,\xi)dd^{c}u(z)
\end{equation*}
We will show that $\beta(\xi)$ is $d\sigma$-integrable: First we see
that $\beta(\xi)\geq0$ and
\begin{equation*}
\int_{\partial\Omega}|\beta(\xi)|d\sigma(\xi)=\int_{\partial\Omega}\beta(\xi)d\sigma(\xi)=\int_{\partial\Omega}\int_{\Omega}P(z,\xi)dd^{c}u(z)d\sigma(\xi)=\int_{\Omega}\int_{\partial\Omega}P(z,\xi)d\sigma(\xi)dd^{c}u(z)
\end{equation*}and since
\begin{equation*}
\int_{\partial\Omega}P(z,\xi)d\sigma(\xi)=1
\end{equation*}we have
\begin{equation*}
\int_{\partial\Omega}|\beta(\xi)|d\sigma(\xi)=\int_{\Omega}dd^{c}u=MA(u)<\infty
\end{equation*}Hence $\beta\in L^{1}(d\sigma)$ and we have
\begin{equation*}
\int_{\partial\Omega}\varphi
d\mu_{u}=\int_{\partial\Omega}\varphi\beta d\sigma
\end{equation*}
$\Rightarrow d\mu_{u}=\beta d\sigma$
\end{proof}
\begin{rem}
From the previous proof we see that for a fixed
$\xi\in\partial\Omega$, $\beta_{\tilde{r}}(\xi)$ which is defined as
\begin{equation*}
\beta_{\tilde{r}}(\xi)=\int_{B_{u}(\tilde{r})}P(z,\xi)dd^{c}u=\int_{S_{u}(\tilde{r})}P(z,\xi)d\mu_{u,\tilde{r}}
\end{equation*}converges to
$\beta(\xi)=\int_{\Omega}P(z,\xi)dd^{c}u(z)$ by Monotone Convergence
Theorem since $\beta_{\tilde{r}}\geq0$ for all $\tilde{r}$ and
$\beta_{\tilde{r}}$ is increasing with respect to $\tilde{r}$.
\end{rem}
Now we will give the characterization of Poletsky-Stessin Hardy
spaces $H^{p}_{u}(\Omega)$ through boundary value functions:
\begin{theorem}
Let $f\in H^{p}(\Omega)$ be a holomorphic function and $u$ be a
continuous, negative, subharmonic exhaustion function for $\Omega$.
Then $f\in H^{p}_{u}(\Omega)$ if and only if $f^{*}\in
L^{p}(d\mu_{u})$ for $1\leq p<\infty$. Moreover
$\|f^{*}\|_{L^{p}(d\mu_{u})}=\|f\|_{H^{p}_{u}(\Omega)}$.
\end{theorem}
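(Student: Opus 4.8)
The plan is to prove the single identity
\[
\|f\|_{H^p_u(\Omega)}^p=\int_{\partial\Omega}|f^*|^p\,d\mu_u
\]
\emph{as an equality in} $[0,\infty]$; once this is established, both the equivalence $f\in H^p_u(\Omega)\Leftrightarrow f^*\in L^p(d\mu_u)$ and the equality of norms follow immediately, since each side is finite exactly when the other is. The idea is to reduce each side to an integral over $\Omega$ taken against $dd^cu$ and then to match the two interior expressions.

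First I would evaluate the left-hand side. As $f$ is holomorphic and $p\ge1$, the function $\phi=|f|^p$ is subharmonic, so the Lelong--Jensen formula gives, for each $r<0$,
\[
\int_{S_u(r)}|f|^p\,d\mu_{u,r}=\int_{B_u(r)}|f|^p\,(dd^cu)+\int_{B_u(r)}(r-u)\,dd^c|f|^p .
\]
Both integrands are nonnegative, the regions $B_u(r)$ increase to $\Omega$ as $r\uparrow0$, and on $B_u(r)$ the weight $r-u$ increases to $-u$; hence by the Monotone Convergence Theorem each term increases to its limit, the $\lim$ exists and equals the $\limsup$, and
\[
\|f\|_{H^p_u(\Omega)}^p=\int_{\Omega}|f|^p\,(dd^cu)+\int_{\Omega}(-u)\,dd^c|f|^p .
\]
Next I would rewrite the right-hand side. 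By Proposition 2.1, $d\mu_u=\beta\,d\sigma$ with $\beta(\xi)=\int_\Omega P(z,\xi)\,dd^cu(z)$, so Tonelli's theorem yields
\[
\int_{\partial\Omega}|f^*|^p\,d\mu_u=\int_{\partial\Omega}|f^*(\xi)|^p\Big(\int_\Omega P(z,\xi)\,dd^cu(z)\Big)d\sigma(\xi)=\int_\Omega h(z)\,dd^cu(z),
\]
where $h(z)=\int_{\partial\Omega}P(z,\xi)|f^*(\xi)|^p\,d\sigma(\xi)$. Since $f\in H^p(\Omega)$, the boundary function $f^*$ lies in $L^p(d\sigma)$ and $h$ is precisely the least harmonic majorant of $|f|^p$.

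It therefore remains to prove the interior identity $\int_\Omega h\,dd^cu=\int_\Omega|f|^p\,dd^cu+\int_\Omega(-u)\,dd^c|f|^p$. For this I would use two instances of the Riesz representation relative to the Green function $g_\Omega(\cdot,w)$ of $\Omega$, normalised to be $\le0$. Because $h$ is the least harmonic majorant of the subharmonic function $|f|^p$, one has $|f|^p(z)=h(z)+\int_\Omega g_\Omega(z,w)\,dd^c|f|^p(w)$; and because $u$ is a negative exhaustion with $u\to0$ at $\partial\Omega$, its least harmonic majorant is $0$, so $u(w)=\int_\Omega g_\Omega(z,w)\,dd^cu(z)$. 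Integrating the nonnegative function $h-|f|^p$ against $dd^cu$ and interchanging the order of integration by Tonelli, using the symmetry $g_\Omega(z,w)=g_\Omega(w,z)$, gives
\[
\int_\Omega(h-|f|^p)\,dd^cu=\int_\Omega\Big(\int_\Omega(-g_\Omega(z,w))\,dd^cu(z)\Big)dd^c|f|^p(w)=\int_\Omega(-u)\,dd^c|f|^p,
\]
which is the required identity, and it is valid in $[0,\infty]$: if $\int_\Omega|f|^p\,dd^cu=\infty$ then, since $|f|^p\le h$, both sides are infinite. Combining the three displays proves the theorem. The main obstacle is the careful bookkeeping of possibly infinite quantities: I must keep every integrand nonnegative and invoke Tonelli rather than Fubini throughout, so as never to form an indeterminate $\infty-\infty$, and I must justify rigorously both that $h$ equals the least harmonic majorant of $|f|^p$ and that the exhaustion $u$ coincides with its own Green potential.
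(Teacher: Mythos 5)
Your proposal is correct and follows essentially the same route as the paper: reduce both sides to interior integrals against $dd^cu$ via the Lelong--Jensen formula, Fubini/Tonelli with the Poisson-kernel density $\beta$ of $d\mu_u$, the Riesz decomposition of $|f|^p$ through its least harmonic majorant, and the Green-potential identity $u(w)=\int_\Omega g_\Omega(z,w)\,dd^cu(z)$. The only (welcome) difference is organizational: you establish the single identity in $[0,\infty]$ and deduce both implications at once, whereas the paper argues the two directions separately, using the Poisson representation of $f$ itself and monotone convergence of $\beta_{\tilde r}$ for the converse.
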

\begin{proof}
(Without loss of generality assume p=1) Now let $f\in
H^{1}_{u}(\Omega)\subset H^{1}(\Omega)$ we want to show that
$f^{*}\in L^{1}(d\mu_{u})$. First of all
\begin{equation*}
\int_{\partial\Omega}|f^{*}(\xi)|d\mu_{u}=\int_{\partial\Omega}|f^{*}(\xi)|\left(\int_{\Omega}P(z,\xi)dd^{c}u(z)\right)d\sigma(\xi)
\end{equation*} and using Fubini-Tonelli theorem we change the order
of integration and get
\begin{equation*}
\int_{\Omega}\left(\underbrace{\int_{\partial\Omega}|f^{*}(\xi)|P(z,\xi)d\sigma(\xi)}_{H(z)}\right)dd^{c}u(z)
\end{equation*}Then the harmonic function being the Poisson integral
of $|f^{*}|$ is the least harmonic majorant of $|f|$ so by
(\cite{ran}, Theorem 4.5.4) we have
\begin{equation*}
H(z)=|f(z)|-\int_{\Omega}g_{\Omega}(w,z)dd^{c}|f(w)|
\end{equation*}where $g_{\Omega}(w,z)$ is the Green function of
$\Omega$ with the logarithmic pole at the point $z$ and by
(\cite{de2},Theorem 4.14), $g_{\Omega}(w,z)$ is continuous on
$\overline{\Omega}$ and subharmonic in $\Omega$ hence we have
\begin{equation*}
\int_{\Omega}H(z)dd^{c}u=\int_{\Omega}|f(z)|dd^{c}u-\int_{\Omega}\left(\int_{\Omega}g_{\Omega}(w,z)dd^{c}u\right)dd^{c}|f(w)
\end{equation*}Now using the boundary version of Lelong- Jensen
formula (\cite{de2},Theorem 3.3) we get
\begin{equation*}
\int_{\Omega}g_{\Omega}(w,z)dd^{c}u(z)=\int_{\Omega}u(z)dd^{c}g_{\Omega}(w,z)=u(w)
\end{equation*}therefore
\begin{equation*}
\int_{\partial\Omega}|f^{*}(\xi)|d\mu_{u}=\int_{\Omega}H(z)dd^{c}u=\int_{\Omega}|f(z)|dd^{c}u-\int_{\Omega}u(z)dd^{c}|f|=\|f\|_{H^{1}_{u}(\Omega)}<\infty
\end{equation*}so we have $f^{*}\in L^{1}(d\mu_{u})$.\\ For the
converse since $f\in H^{1}(\Omega)$ we have
\begin{equation*}
f(z)=\int_{\Omega}P(z,\xi)f^{*}(\xi)d\sigma(\xi)
\end{equation*}now
\begin{equation*}
\int_{S_{u}(r)}|f(z)|d\mu_{u,r}=\int_{S_{u}(r)}\left|\int_{\Omega}P(z,\xi)f^{*}(\xi)d\sigma(\xi)\right|d\mu_{u,r}\leq\int_{\Omega}\left(\int_{S_{u}(r)}P(z,\xi)d\mu_{u,r}\right)|f^{*}(\xi)|d\sigma(\xi)
\end{equation*}by Remark 2 we know
$\int_{S_{u}(r)}P(z,\xi)d\mu_{u,r}$ is increasing and using Monotone
Convergence Theorem as $r\rightarrow0$ we get
\begin{equation*}
\|f\|_{H^{1}_{u}(\Omega)}\leq\int_{\partial\Omega}|f^{*}(\xi)|d\mu_{u}(\xi)<\infty
\end{equation*}so $f\in H^{1}_{u}(\Omega)$
\end{proof}
\begin{rem}
The existence of boundary values for holomorphic functions depends
on conditions related to domain itself and the growth of the
functions. However throughout this study we have been exploiting the
fact that Poletsky-Stessin Hardy classes are inside the classical
ones so we already have the non tangential boundary values. In his
2011 paper (\cite{pol2}) Poletsky gave definitions for two types of
boundary values namely, strong and weak limit values for sequences
of functions in an abstract setting that involves sequences of
weakly convergent measures on increasing chains of compact sets (See
\cite{pol2} for details). If we specify his abstract setup to the
functions $f\in H^{p}_{u}(\Omega)$ together with Monge-Amp\'ere
measures $d\mu_{u,r}$ and the boundary Monge-Amp\'ere measure
$d\mu_{u}$, it is possible to prove that the boundary value function
$f^{*}$ is both weak and strong limit value for $f\in
H^{p}_{u}(\Omega)$ (Details will be given in the thesis \cite{sah})
\end{rem}
\section {Factorization}
Let $\Omega$ be a domain in $\mathbb{C}$ containing $0$ and bounded
by an analytic Jordan curve. Suppose
$\psi:\mathbb{D}\rightarrow\Omega$ is the conformal map such that
$\psi(0)=0$ and $\varphi=\psi^{-1}$. By Carath\'eodory theorem for a
domain like $\Omega$ we have $0<m\leq|\psi'|\leq M<\infty$ for some
$m,M>0$. Following (\cite{ale}) we have the following :
\begin{defn}
A holomorphic function $h$ on $\Omega$ is $\Omega$-inner if
$h\circ\psi$ is inner in the classical sense i.e. $|h\circ\psi|=1$
for almost all $\xi\in\partial\mathbb{D}$ and $\Omega$-outer if
$h\circ\psi$ is outer in the classical sense i.e.
\begin{equation*}
\log|h\circ\psi(0)|=\int_{\partial\mathbb{D}}\log|h\circ\psi|d\sigma
\end{equation*}
\end{defn}
For the classical Hardy space on the unit disc $\mathbb{D}$ we have
the following canonical factorization theorem (\cite{dur},pg:24):
\begin{theorem}
Every function $f\not\equiv0$ of class $H^{p}(\mathbb{D})$ has a
unique factorization of the form $f=BSF$ where $B$ is a Blaschke
product, $S$ is a singular inner function and $F$ is an outer
function for the class $H^{p}(\mathbb{D})$.
\end{theorem}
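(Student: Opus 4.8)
The plan is to reduce to the zero-free case by peeling off the zeros with a Blaschke product, and then to obtain the remaining inner--outer splitting from the Riesz--Herglotz representation of $\log|g|$.

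First I would enumerate the zeros $\{a_{n}\}$ of $f$ in $\mathbb{D}\setminus\{0\}$ together with a zero of order $m\ge 0$ at the origin. Applying Jensen's formula to $f$ on circles $|z|=\rho$, and using that $f\in H^{p}$ controls the growth of $\int_{0}^{2\pi}\log^{+}|f(\rho e^{i\theta})|\,d\theta$, one gets the Blaschke condition $\sum_{n}(1-|a_{n}|)<\infty$. Hence the Blaschke product
\begin{equation*}
B(z)=z^{m}\prod_{n}\frac{|a_{n}|}{a_{n}}\,\frac{a_{n}-z}{1-\overline{a_{n}}z}
\end{equation*}
converges locally uniformly, satisfies $|B|\le 1$ on $\mathbb{D}$ and $|B^{*}|=1$ a.e.\ on $\partial\mathbb{D}$. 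Setting $g=f/B$, the singularities at the $a_{n}$ are removable, so $g$ is holomorphic and zero free on $\mathbb{D}$; because $|B^{*}|=1$ a.e.\ one checks $g\in H^{p}(\mathbb{D})$ with $\|g\|_{H^{p}}=\|f\|_{H^{p}}$, which in particular keeps the outer part of the construction inside $H^{p}$.

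Next I would factor the zero-free $g$. Since $g\in H^{p}\subset N$ and $g$ never vanishes, $\log|g|$ is harmonic and, by Nevanlinna theory, is a difference of two positive harmonic functions; by the Riesz--Herglotz theorem it is the Poisson--Stieltjes integral
\begin{equation*}
\log|g(z)|=\frac{1}{2\pi}\int_{0}^{2\pi}P(z,\theta)\,d\nu(\theta)
\end{equation*}
of a finite signed Borel measure $\nu$ on $\partial\mathbb{D}$. I would then split $\nu=\nu_{a}+\nu_{s}$ into its absolutely continuous and singular parts, with $d\nu_{a}=\log|g^{*}|\,d\theta$ (the boundary values of $\log|g|$, which lie in $L^{1}$ by the Jensen-type inequality $\log|g(0)|\le\frac{1}{2\pi}\int_{0}^{2\pi}\log|g^{*}|\,d\theta$). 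Exponentiating the analytic completions produces
\begin{equation*}
F(z)=e^{i\gamma}\exp\!\left(\frac{1}{2\pi}\int_{0}^{2\pi}\frac{e^{i\theta}+z}{e^{i\theta}-z}\log|g^{*}(e^{i\theta})|\,d\theta\right),\qquad
S(z)=\exp\!\left(-\frac{1}{2\pi}\int_{0}^{2\pi}\frac{e^{i\theta}+z}{e^{i\theta}-z}\,d\nu_{s}(\theta)\right),
\end{equation*}
so that $g=SF$ with $|F^{*}|=|g^{*}|$ a.e.\ and $|S^{*}|=1$ a.e.; combining gives $f=BSF$.

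The step I expect to be the main obstacle is controlling the singular part $\nu_{s}$: to know that $S$ is a genuine \emph{bounded} singular inner function (rather than a factor with poles) one must show $\nu_{s}\le 0$, so that $|S|=\exp(-\frac{1}{2\pi}\int_{0}^{2\pi}P\,d\nu_{s})\le 1$ on $\mathbb{D}$. This non-positivity is exactly the quantitative content of the subharmonicity of $\log|g|$ together with the Jensen inequality above, and it is also what forces $F$ to be outer, its boundary modulus recapturing all of $|g^{*}|$. Finally, for uniqueness I would argue that the zero set of $f$ determines $B$ up to a unimodular constant; passing to boundary moduli in $BSF=B'S'F'$ and using $|B^{*}|=|S^{*}|=1$ a.e.\ gives $|F^{*}|=|(F')^{*}|$ a.e., whence $F$ and $F'$ agree up to a unimodular constant because an outer function is determined by the modulus of its boundary values; cancelling then forces $S=S'$ up to a constant, and the constants can be normalized away.
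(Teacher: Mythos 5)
First, a point of comparison: the paper does not prove this statement at all --- it is quoted as the classical canonical factorization theorem and attributed to Duren (\cite{dur}, p.\ 24) --- so there is no in-paper argument to measure yours against. Your proposal is the standard textbook proof (Riesz's removal of the zeros by a Blaschke product, then the Poisson--Stieltjes representation of $\log|g|$ for the zero-free quotient $g=f/B$), and in outline it is the right argument; the uniqueness paragraph is also fine.

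Two corrections, one cosmetic and one substantive. Cosmetic: with your normalization $\log|g(z)|=\frac{1}{2\pi}\int P(z,\theta)\,d\nu(\theta)$, the minus sign you put in the exponent of $S$ gives $\log|SF|=\log|F|-\frac{1}{2\pi}\int P\,d\nu_{s}\neq\log|g|$, and under $\nu_{s}\le0$ it would give $|S|\ge1$; drop the minus sign (the classical form $S=\exp\left(-\int\frac{e^{i\theta}+z}{e^{i\theta}-z}\,d\mu\right)$ corresponds to the positive singular measure $\mu=-\frac{1}{2\pi}\nu_{s}$). Substantive: the step you yourself flag as the main obstacle is not actually closed by what you cite. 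The one-point Jensen inequality $\log|g(0)|\le\frac{1}{2\pi}\int_{0}^{2\pi}\log|g^{*}|\,d\theta$ only yields $\nu_{s}(\partial\mathbb{D})\le0$, i.e.\ nonpositive \emph{total mass}; to get $\nu_{s}\le0$ \emph{as a measure} you need the pointwise domination $\log|g(z)|\le\frac{1}{2\pi}\int_{0}^{2\pi}P(z,\theta)\log|g^{*}(e^{i\theta})|\,d\theta$ for every $z\in\mathbb{D}$, which gives $\frac{1}{2\pi}\int P(z,\theta)\,d\nu_{s}\le0$ for all $z$ and hence $\nu_{s}\le0$ by taking weak-$*$ limits of $P[\nu_{s}](re^{i\theta})\,\frac{d\theta}{2\pi}$. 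Establishing that domination is precisely where the hypothesis $g\in H^{p}$ (rather than merely $g\in N$) enters: one writes $\log|g(z)|$ as the Poisson integral of $\log|g(re^{i\theta})|$ over the circle of radius $r$, splits $\log=\log^{+}-\log^{-}$, applies Fatou's lemma to the $\log^{-}$ part, and uses uniform integrability of $\log^{+}|g_{r}|\le p^{-1}|g_{r}|^{p}$ (with $|g_{r}|^{p}\to|g^{*}|^{p}$ in $L^{1}$) for the $\log^{+}$ part. Without this piece your construction only places $g$ in $N$, whose singular measure can have either sign, so the gap is genuine even though the missing argument is classical.
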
Using this result and the conformal mapping the following is
shown in (\cite{ale}):
\begin{prop}
Every $f\in H^{p}(\Omega)$ can be factored uniquely up to a
unimodular constant as $f=IF$ where $I$ is $\Omega$-inner and $F$ is
$\Omega$-outer.
\end{prop}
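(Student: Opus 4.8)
The plan is to transport the problem to the unit disc via the conformal map $\psi$ and then invoke the classical canonical factorization theorem stated above. The step that must be established first is that composition with $\psi$ identifies $H^{p}(\Omega)$ with $H^{p}(\mathbb{D})$: I claim that for $f\in\mathcal{O}(\Omega)$ one has $f\in H^{p}(\Omega)$ if and only if $f\circ\psi\in H^{p}(\mathbb{D})$. This is immediate from the harmonic-majorant definition (\ref{eq:clashard}): harmonicity in one complex variable is preserved under composition with a conformal map, and $|f\circ\psi|^{p}=|f|^{p}\circ\psi$, so if $v$ is a harmonic majorant of $|f|^{p}$ on $\Omega$ then $v\circ\psi$ is a harmonic majorant of $|f\circ\psi|^{p}$ on $\mathbb{D}$, and conversely using $\varphi=\psi^{-1}$. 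The bounds $0<m\le|\psi'|\le M<\infty$ then guarantee that the boundary correspondence $\psi\colon\partial\mathbb{D}\to\partial\Omega$ carries Lebesgue null sets to null sets in both directions, which is exactly what makes the almost-everywhere conditions in the definition of $\Omega$-inner and $\Omega$-outer functions transport cleanly.

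Given $f\in H^{p}(\Omega)$ with $f\not\equiv0$, set $g=f\circ\psi\in H^{p}(\mathbb{D})$, which is not identically zero since $\psi$ is a bijection. By the classical canonical factorization theorem, $g=BSF_{0}$ with $B$ a Blaschke product, $S$ a singular inner function, and $F_{0}$ outer for $H^{p}(\mathbb{D})$. Writing $J=BS$, which is inner (so $|J|=1$ a.e. on $\partial\mathbb{D}$), we obtain $g=JF_{0}$ with $J$ inner and $F_{0}$ outer. I then define
\begin{equation*}
I=J\circ\varphi,\qquad F=F_{0}\circ\varphi
\end{equation*}
on $\Omega$. These are holomorphic as compositions of holomorphic maps, and $f=(JF_{0})\circ\varphi=IF$ on $\Omega$. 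Moreover $I\circ\psi=J$ is inner and $F\circ\psi=F_{0}$ is outer in the classical sense, so by the defining properties $I$ is $\Omega$-inner and $F$ is $\Omega$-outer; since outer functions for $H^{p}(\mathbb{D})$ lie in $H^{p}(\mathbb{D})$, the first step also gives $F\in H^{p}(\Omega)$. This produces the desired factorization.

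For uniqueness, suppose $f=I_{1}F_{1}=I_{2}F_{2}$ are two factorizations into an $\Omega$-inner times an $\Omega$-outer function. Composing with $\psi$ yields $(I_{1}\circ\psi)(F_{1}\circ\psi)=(I_{2}\circ\psi)(F_{2}\circ\psi)=g$, where by definition each $I_{j}\circ\psi$ is inner and each $F_{j}\circ\psi$ is outer. The uniqueness clause of the classical factorization theorem forces $I_{1}\circ\psi=c\,(I_{2}\circ\psi)$ and $F_{1}\circ\psi=\bar{c}\,(F_{2}\circ\psi)$ for some unimodular constant $c$. Composing with $\varphi$ gives $I_{1}=cI_{2}$ and $F_{1}=\bar{c}F_{2}$, which is precisely uniqueness up to a unimodular constant.

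The one step that genuinely requires care, and which I would flag as the main (though modest) obstacle, is the conformal invariance of $H^{p}$ together with the assertion that the inner/outer dichotomy is preserved under $\psi$. Once the harmonic-majorant characterization has been pulled back and the measure-theoretic boundary correspondence under $\psi$, controlled by $0<m\le|\psi'|\le M<\infty$, is in hand, everything else is a direct transcription of the classical theorem through the conformal map.
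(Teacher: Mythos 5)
Your proof is correct and is precisely the argument the paper has in mind: the paper does not prove this proposition itself but cites \cite{ale}, remarking only that it follows from the classical canonical factorization theorem ``and the conformal mapping,'' which is exactly the transport-by-$\psi$ argument you carry out in detail. No issues.
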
Inspired by this result we have the following corollary
\begin{cor}
Let $f\in H^{p}_{u}(\Omega)$ where $u$ is a continuous exhaustion
function with finite Monge-Amp\'ere mass. Then $f$ can be factored
as $f=IF$ where $I$ is $\Omega$-inner and $F$ is $\Omega$-outer.
Moreover $I\in H^{p}_{u}(\Omega)$ and $F\in H^{p}_{u}(\Omega)$.
\end{cor}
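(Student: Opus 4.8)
The plan is to read off the factorization itself from the $H^{p}(\Omega)$ theory and then to verify membership of each factor in $H^{p}_{u}(\Omega)$ by means of the boundary value characterization. First I would note that, by the inclusion $H^{p}_{u}(\Omega)\subset H^{p}(\Omega)$, the function $f$ lies in $H^{p}(\Omega)$, so Proposition 3.1 applies and produces the factorization $f=IF$ with $I$ being $\Omega$-inner and $F$ being $\Omega$-outer, unique up to a unimodular constant. This already settles the factorization statement; the content that remains is to show that $I$ and $F$ separately belong to $H^{p}_{u}(\Omega)$, and here I would invoke Theorem 2.1, which reduces membership in $H^{p}_{u}(\Omega)$ (for a function already in $H^{p}(\Omega)$) to $p$-integrability of the boundary value against $d\mu_{u}$.

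Next I would treat the inner factor. Since $I$ is $\Omega$-inner, $h=I\circ\psi$ is a classical inner function on $\mathbb{D}$, so $|h|\leq 1$ on $\mathbb{D}$ and hence $I$ is bounded on $\Omega$; in particular $I\in H^{\infty}(\Omega)\subset H^{p}(\Omega)$, which makes Theorem 2.1 applicable to $I$. On the boundary $|h^{*}|=1$ almost everywhere on $\partial\mathbb{D}$, and transporting this through the conformal map $\psi$, which extends to a homeomorphism of the closures and satisfies $0<m\leq|\psi'|\leq M$, gives $|I^{*}|=1$ $\sigma$-almost everywhere on $\partial\Omega$. At this point I use Proposition 2.1: because $\mu_{u}$ and the Euclidean measure $\sigma$ are mutually absolutely continuous, the identity $|I^{*}|=1$ also holds $\mu_{u}$-almost everywhere. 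Consequently $\int_{\partial\Omega}|I^{*}|^{p}\,d\mu_{u}=\mu_{u}(\partial\Omega)=MA(u)<\infty$, and Theorem 2.1 yields $I\in H^{p}_{u}(\Omega)$.

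For the outer factor, Proposition 3.1 already places $F$ in $H^{p}(\Omega)$, so Theorem 2.1 again applies to $F$. Since nontangential boundary values are multiplicative, $f^{*}=I^{*}F^{*}$ $\sigma$-almost everywhere, and using $|I^{*}|=1$ almost everywhere (with respect to $\sigma$, hence with respect to $\mu_{u}$) I obtain $|F^{*}|=|f^{*}|$ $\mu_{u}$-almost everywhere. Therefore $\int_{\partial\Omega}|F^{*}|^{p}\,d\mu_{u}=\int_{\partial\Omega}|f^{*}|^{p}\,d\mu_{u}=\|f\|_{H^{p}_{u}(\Omega)}^{p}<\infty$, where the last equality and the finiteness follow from $f\in H^{p}_{u}(\Omega)$ together with Theorem 2.1. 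Hence $F^{*}\in L^{p}(d\mu_{u})$ and $F\in H^{p}_{u}(\Omega)$.

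The one genuinely delicate point is the passage from ``$\sigma$-almost everywhere'' to ``$\mu_{u}$-almost everywhere'' for the relation $|I^{*}|=1$; this is precisely what the mutual absolute continuity of Proposition 2.1 provides, and without it the Monge-Amp\'ere norm of the inner factor could not be controlled. A secondary technical step is transferring the boundary modulus from $\partial\mathbb{D}$ to $\partial\Omega$, for which I would lean on the Carath\'eodory boundary extension of $\psi$ and the two-sided bound on $|\psi'|$ so that $\sigma$-null sets are preserved under $\psi$. Everything else is standard: inner functions are unimodular on the boundary, outer factors inherit $H^{p}$ membership from the canonical factorization, and boundary values pass through products.
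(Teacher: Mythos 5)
Your proposal is correct and follows essentially the same route as the paper: obtain the factorization from the $H^{p}(\Omega)$ theory via the inclusion $H^{p}_{u}(\Omega)\subset H^{p}(\Omega)$, then bound $\int_{\partial\Omega}|I^{*}|^{p}\,d\mu_{u}$ by $MA(u)$ using unimodularity of the inner boundary values and identify $\int_{\partial\Omega}|F^{*}|^{p}\,d\mu_{u}$ with $\int_{\partial\Omega}|f^{*}|^{p}\,d\mu_{u}$, concluding by the boundary value characterization. The only cosmetic difference is that the paper transports $\mu_{u}$ to $\partial\mathbb{D}$ as a pushforward measure $\hat{\mu}$ and computes there, while you stay on $\partial\Omega$ and make the passage from $\sigma$-a.e.\ to $\mu_{u}$-a.e.\ explicit via Proposition 2.1 --- a point the paper leaves implicit.
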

\begin{proof}
First of all since $H^{p}_{u}(\Omega)\subset H^{p}(\Omega)$ by the
above proposition it is obvious that $f$ can be factored as $f=IF$
where $I$ is $\Omega$-inner and $F$ is $\Omega$-outer. Now define
the measure $\hat{\mu}$ on $\partial\mathbb{D}$ as
$\hat{\mu}(E)=\mu_{u}(\psi(E))$ for any measurable set
$E\subset\partial\mathbb{D}$ then it is clear from the change of
variables formula that $|I\circ\psi|=1$ $d\hat{\mu}$-a.e. so
\begin{equation*}
\int_{\partial\Omega}|I|d\mu_{u}=\int_{\partial\mathbb{D}}|I\circ\psi|d\hat{\mu}=\int_{\partial\mathbb{D}}d\hat{\mu}=\int_{\partial\Omega}d\mu_{u}=MA(u)<\infty
\end{equation*}since $u$ has finite Monge-Amp\'ere mass and for the
outer part
\begin{equation*}
\int_{\partial\Omega}|F|d\mu_{u}=\int_{\partial\mathbb{D}}|F\circ\psi|d\hat{\mu}=\int_{\partial\mathbb{D}}|f^{*}\circ\psi|d\hat{\mu}=\int_{\partial\Omega}|f^{*}|d\mu_{u}<\infty
\end{equation*}since $f\in H^{p}_{u}(\Omega)$. Hence we have $I\in H^{p}_{u}(\Omega)$ and $F\in
H^{p}_{u}(\Omega)$.
\end{proof}In fact in the particular case of unit disc $\mathbb{D}$ we can say more about this factorization :
\begin{theorem}
Let $f$ be analytic function in $\mathbb{D}$ such that $f\in
H^{p}_{u}(\mathbb{D})$. Then $f$ can be factored into a Blaschke
product $B$, a singular inner function $S$ and an outer function $F$
such that $B,S,F\in H^{p}_{u}(\mathbb{D})$ for $1\leq p<\infty$.
\end{theorem}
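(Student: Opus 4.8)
The plan is to reduce the whole statement to the boundary value characterization of Theorem 2.1, so that membership of each factor in $H^{p}_{u}(\mathbb{D})$ becomes a question of $L^{p}(d\mu_{u})$-integrability of its boundary values. First I would invoke the classical canonical factorization (Theorem 3.1): since $f\in H^{p}_{u}(\mathbb{D})\subset H^{p}(\mathbb{D})$, the function $f$ admits a factorization $f=BSF$ with $B$ a Blaschke product, $S$ a singular inner function, and $F$ the outer factor, where each of $B$, $S$, $F$ lies in $H^{p}(\mathbb{D})$. In particular each factor possesses a classical non-tangential boundary value function, so Theorem 2.1 is applicable to each of them separately. This refines Corollary 1.3 (resp. the factorization corollary), which only splits off the full inner part $I=BS$.

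Next I would treat the two inner factors. Both $B$ and $S$ are inner, so $|B^{*}|=|S^{*}|=1$ holds $\sigma$-almost everywhere on $\partial\mathbb{D}$. By Proposition 2.1 the boundary Monge-Amp\'ere measure $\mu_{u}$ and the Euclidean measure $\sigma$ are mutually absolutely continuous, hence these unimodularity statements hold $\mu_{u}$-almost everywhere as well. Therefore
\[
\int_{\partial\mathbb{D}}|B^{*}|^{p}\,d\mu_{u}=\int_{\partial\mathbb{D}}d\mu_{u}=MA(u)<\infty,
\]
and the identical computation applies to $S$. By Theorem 2.1 this gives $B\in H^{p}_{u}(\mathbb{D})$ and $S\in H^{p}_{u}(\mathbb{D})$.

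For the outer factor, the pointwise identity $|f^{*}|=|B^{*}|\,|S^{*}|\,|F^{*}|=|F^{*}|$ holds $\sigma$-almost everywhere, hence again $\mu_{u}$-almost everywhere by Proposition 2.1. Consequently
\[
\int_{\partial\mathbb{D}}|F^{*}|^{p}\,d\mu_{u}=\int_{\partial\mathbb{D}}|f^{*}|^{p}\,d\mu_{u}=\|f\|_{H^{p}_{u}(\mathbb{D})}^{p}<\infty,
\]
where the last equality is the norm identity in Theorem 2.1. Applying Theorem 2.1 once more yields $F\in H^{p}_{u}(\mathbb{D})$, which completes the argument.

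I do not anticipate a serious obstacle here, since Theorem 2.1 and Proposition 2.1 carry the entire weight of the proof. The only points requiring genuine care are twofold: first, that each factor truly belongs to $H^{p}(\mathbb{D})$ so that its boundary value function exists and Theorem 2.1 is legitimately applicable --- this is automatic for the bounded inner factors $B$ and $S$, and is part of the content of the classical factorization for $F$; second, the transfer of $\sigma$-almost-everywhere unimodularity to $\mu_{u}$-almost-everywhere statements, which is exactly where the mutual absolute continuity furnished by Proposition 2.1 is indispensable. Without that equivalence of null sets one could not conclude that $|B^{*}|=|S^{*}|=1$ and $|f^{*}|=|F^{*}|$ hold $\mu_{u}$-a.e., and the integral computations above would fail.
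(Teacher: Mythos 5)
Your proof is correct, but it is packaged differently from the paper's. The paper handles the two inner factors $B$ and $S$ by the blunter observation that they are bounded and that bounded functions lie in $H^{p}_{u}(\mathbb{D})$ whenever $u$ has finite Monge-Amp\'ere mass (so no boundary-value machinery is needed there), and it treats the outer factor $F$ by writing $F$ as the Poisson integral of $F^{*}$, estimating $\int_{S(\widehat{r})}|F|\,d\mu_{u,\widehat{r}}$ against $\int\bigl(\int_{S(\widehat{r})}P\,d\mu_{u,\widehat{r}}\bigr)|F^{*}|\,dt$, and letting $\widehat{r}\to 0$ via monotone convergence together with $|f^{*}|=|F^{*}|$ a.e. --- in effect re-running the ``converse'' direction of Theorem 2.1 for this one function. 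You instead route all three factors through Theorem 2.1 uniformly: unimodularity of $B^{*}$ and $S^{*}$ gives $\int|B^{*}|^{p}d\mu_{u}=MA(u)$ after transferring the $\sigma$-a.e.\ statement to a $\mu_{u}$-a.e.\ one via $d\mu_{u}=\beta\,d\sigma$, and $|F^{*}|=|f^{*}|$ gives the outer factor for free from the norm identity. Your version is more economical and makes the logical dependence on the boundary characterization explicit; the paper's version is more self-contained for $B$ and $S$ (it does not need Proposition 2.1 at all for those factors, only finiteness of the mass) at the cost of repeating an argument already contained in Theorem 2.1 for $F$. One pedantic point you already flag correctly: to apply Theorem 2.1 to each factor you must know $B,S,F\in H^{p}(\mathbb{D})$ beforehand, which holds since $B,S$ are bounded and $F$ is the $H^{p}$-outer part in the classical canonical factorization; and for the transfer of null sets you only need $\mu_{u}\ll\sigma$, which is exactly what the formula $d\mu_{u}=\beta\,d\sigma$ of Proposition 2.1 provides.
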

\begin{proof}
(Without loss of generality assume p=1) Let $f\in
H^{1}_{u}(\mathbb{D})$ then $f\in H^{1}(\mathbb{D})$ so $f$ has
canonical decomposition $f=BSF$ where $B$ is a Blaschke product, $S$
is a singular inner function and $F$ is an outer function. The
exhaustion $u$ has finite mass so bounded functions belong to class
$H^{1}_{u}(\mathbb{D})$. Since $B$ and $S$ are bounded
(\cite{dur},pg:24), $B,S\in H^{1}_{u}(\mathbb{D})$. As the outer
function $F(z)$ is concerned , we know $F\in H^{1}(\mathbb{D})$ so
we have
\begin{equation*}
F(z)=\frac{1}{2\pi}\int_{0}^{2\pi}P(r,\theta-t)F^{*}(e^{it})dt
\end{equation*}
Now
\begin{equation*}
\int_{S(\widehat{r})}|F(z)|d\mu_{u,\widehat{r}}=\frac{1}{2\pi}\int_{S(\widehat{r})}\left|\int_{0}^{2\pi}P(r,\theta-t)F^{*}(e^{it})dt\right|d\mu_{u,\widehat{r}}
\end{equation*}
\begin{equation*}
\leq\frac{1}{2\pi}\int_{0}^{2\pi}\left(\int_{S(\widehat{r})}P(r,\theta-t)d\mu_{u,\widehat{r}}\right)|F^{*}(e^{it})|dt=\frac{1}{2\pi}\int_{0}^{2\pi}\left(\int_{S(\widehat{r})}P(r,\theta-t)d\mu_{u,\widehat{r}}\right)|f^{*}(e^{it})|dt
\end{equation*}and using the previous result, monotone convergence theorem and the fact that $|f^{*}|=|F^{*}|$ ($dt$)- a.e. we get as $\widehat{r}\rightarrow0$:
\begin{equation*}
\|F(z)\|_{H^{1}_{u}(\mathbb{D})}\leq\int_{\partial\mathbb{D}}|f^{*}(e^{it})|d\mu_{u}(t)<\infty
\end{equation*}
$\Rightarrow F(z)\in H^{1}_{u}(\mathbb{D})$
\end{proof}
In the classical $H^{p}$ space theory a useful tool for the proofs
is that any analytic function $f\in H^{1}(\mathbb{D})$ can be
expressed as a product of two functions , $f=gh$ where both factors
$g$ and $h\in H^{2}(\mathbb{D})$. Now we will show that there is a
similar factorization in the spaces $H^{p}_{u}(\Omega)$.
\begin{cor}
Suppose $1\leq p<\infty$, $f\in H^{p}_{u}(\Omega)$, $f\not\equiv0$.
Then there is a zero-free function $h\in H^{2}_{u}(\Omega)$ such
that $f=Ih^{\frac{2}{p}}$. In particular every $f\in
H^{1}_{u}(\Omega)$ is a product of $f=gh$ in which both factors are
in $H^{2}_{u}(\Omega)$.
\end{cor}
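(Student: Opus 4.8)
The plan is to build the factorization directly from the inner--outer decomposition already available in $H^{p}_{u}(\Omega)$ and then to feed the candidate factors into the boundary value characterization of Theorem 2.1. First I would apply Corollary 3.1 to write $f=IF$, where $I$ is $\Omega$-inner, $F$ is $\Omega$-outer, and both $I,F\in H^{p}_{u}(\Omega)$. Since $F$ is $\Omega$-outer it has no zeros in $\Omega$, and as $\Omega$ is simply connected a single-valued holomorphic branch of $\log F$ exists; I would therefore set $h:=\exp\!\big(\tfrac{p}{2}\log F\big)=F^{p/2}$, which is zero-free and satisfies $h^{2/p}=F$. This already yields the desired identity $f=IF=Ih^{2/p}$, so the entire content of the first assertion reduces to showing $h\in H^{2}_{u}(\Omega)$.

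To locate $h$ inside the classical space $H^{2}(\Omega)$ I would pass through the conformal map $\psi$. Because $F\circ\psi$ is outer in $H^{p}(\mathbb{D})$, the classical representation of outer functions shows that $h\circ\psi=(F\circ\psi)^{p/2}$ is again an outer function, now with boundary modulus $|F^{*}\circ\psi|^{p/2}$; since $F\circ\psi\in H^{p}(\mathbb{D})$ gives $\int_{\partial\mathbb{D}}|F^{*}\circ\psi|^{p}\,d\sigma<\infty$, this outer function lies in $H^{2}(\mathbb{D})$. Using the Carath\'eodory bounds $0<m\le|\psi'|\le M$, which make composition with $\psi$ an isomorphism between the corresponding classical Hardy classes on $\mathbb{D}$ and on $\Omega$, I conclude that $h$ is $\Omega$-outer and $h\in H^{2}(\Omega)$. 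I expect this step---verifying that a fractional power of an outer function is again outer and of the correct integrability---to be the main technical point, although it is classical once the problem is transported to the disc.

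With $h\in H^{2}(\Omega)$ secured, the membership $h\in H^{2}_{u}(\Omega)$ follows from Theorem 2.1 applied with exponent $2$: I would simply compute the boundary integral
\begin{equation*}
\int_{\partial\Omega}|h^{*}|^{2}\,d\mu_{u}=\int_{\partial\Omega}|F^{*}|^{p}\,d\mu_{u}=\int_{\partial\Omega}|f^{*}|^{p}\,d\mu_{u},
\end{equation*}
where the first equality uses $|h^{*}|=|F^{*}|^{p/2}$ and the second uses $|F^{*}|=|f^{*}|$ a.e.\ on $\partial\Omega$, a consequence of $|I^{*}|=1$ a.e.\ (which in turn comes from $|I\circ\psi|=1$ a.e.\ and the definition of $\hat{\mu}$). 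By Theorem 2.1 applied to $f$ the last integral equals $\|f\|_{H^{p}_{u}(\Omega)}^{p}<\infty$, so $h^{*}\in L^{2}(d\mu_{u})$ and hence $h\in H^{2}_{u}(\Omega)$. This completes the first assertion.

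Finally, for the case $p=1$ the identity becomes $f=Ih^{2}$, and I would set $g:=Ih$. Since $I$ is bounded with $|I^{*}|=1$ a.e., the function $g$ lies in $H^{2}(\Omega)$ and satisfies $\int_{\partial\Omega}|g^{*}|^{2}\,d\mu_{u}=\int_{\partial\Omega}|h^{*}|^{2}\,d\mu_{u}<\infty$, so $g\in H^{2}_{u}(\Omega)$ again by Theorem 2.1. Thus $f=gh$ with both factors in $H^{2}_{u}(\Omega)$, as required.
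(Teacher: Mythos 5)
Your proof is correct and follows essentially the same route as the paper: both extract the zero-free outer factor $F=f/I$ from Corollary 3.1, use simple connectivity of $\Omega$ to form $h=\exp\bigl(\tfrac{p}{2}\log F\bigr)$ so that $|h|^{2}=|F|^{p}$, and write $f=(Ih)h$ when $p=1$. The only (harmless) difference is that you verify $h\in H^{2}_{u}(\Omega)$ by passing through the boundary-value characterization of Theorem 2.1, whereas the paper reads it off directly from $|h|^{2}=|f/I|^{p}$ and the definition of the $H^{2}_{u}$ norm.
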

\begin{proof}
By the previous theorem $f/I\in H^{p}_{u}(\Omega)$. Since $f/I$ has
no zero in $\Omega$ and $\Omega$ is simply connected , there exists
$\varphi\in\mathcal{O}(\Omega)$ such that $\exp(\varphi)=f/I$.
Define $h=\exp(p\varphi/2)$ then $h\in \mathcal{O}(\Omega)$ and
$|h|^{2}=|f/I|^{p}$ and $h\in H^{2}_{u}(\Omega)$ and
$f=Ih^{\frac{2}{p}}$. To obtain $f=gh$ for $f\in H^{1}_{u}(\Omega)$
write $f=Ih^{2}$ in the form $f=(Ih)h$.
\end{proof}
\section {Approximation}
Let $A(\Omega)$ denote the algebra of holomorphic functions on
$\Omega$ which are continuous on $\partial\Omega$. We know that the
algebra of holomorphic functions $A(\Omega)$ is dense in the
classical Hardy spaces when $\Omega$ is a domain bounded by an
analytic Jordan curve and we will show an analogous approximation
result on $H^{p}_{u}(\Omega)$ where $u$ is a negative, continuous,
subharmonic exhaustion function on $\Omega$ with finite
Monge-Amp\'ere mass but before the result we should first mention
some classes of analytic functions from the classical theory on the
unit disc which will help us in the proof of approximation result:
\begin{defn}
An analytic function $f\in\mathcal{O}(\mathbb{D})$ is said to be of
class $N$ if the integrals
\begin{equation*}
\int_{0}^{2\pi}\log^{+}|f(re^{i\theta})|d\theta
\end{equation*} are bounded for $r<1$.
\end{defn}
\begin{defn}
An analytic function $f\in\mathcal{O}(\mathbb{D})$ is in class
$N^{+}$ if it has the form $f=BSF$ where $B$ is a Blaschke product,
$S$ is a singular inner function and $F$ is an outer function for
the class $N$
\end{defn}
It is clear that $N\supset N^{+}\supset H^{p}(\mathbb{D})$ for all
$p>0$.(For details see \cite{dur}). We will also use the following
result (Theorem 2.11, \cite{dur}), which plays a crucial role in our
approximation result:

\begin{theorem}
If a holomorphic function $f\in N^{+}$ and $f^{*}\in L^{p}$ for some
$p>0$, then $f\in H^{p}(\mathbb{D})$.
\end{theorem}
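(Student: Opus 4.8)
\emph{Proof proposal.} The plan is to reduce the whole statement to the outer factor of $f$ and then to exploit the defining feature of the class $N^{+}$: for an outer function the logarithm of its modulus is \emph{exactly} the Poisson integral of $\log|f^{*}|$, with no extra singular contribution. First I would invoke the definition of $N^{+}$ to write
\begin{equation*}
f=BSF,
\end{equation*}
where $B$ is a Blaschke product, $S$ is a singular inner function and $F$ is outer for the class $N$. Since $|B(z)|\leq1$ and $|S(z)|\leq1$ throughout $\mathbb{D}$, this gives the pointwise bound $|f(z)|\leq|F(z)|$ on $\mathbb{D}$; and since $|B^{*}|=|S^{*}|=1$ almost everywhere on $\partial\mathbb{D}$, it gives $|f^{*}|=|F^{*}|$ a.e. In particular the hypothesis $f^{*}\in L^{p}$ yields $F^{*}\in L^{p}$, so it suffices to prove $F\in H^{p}(\mathbb{D})$ and then transfer the conclusion to $f$ via $|f|\leq|F|$.

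Next I would write down the outer representation
\begin{equation*}
\log|F(re^{i\theta})|=\frac{1}{2\pi}\int_{0}^{2\pi}P(r,\theta-t)\log|F^{*}(e^{it})|\,dt,
\end{equation*}
which holds precisely because $F$ is outer. Exponentiating, multiplying the integrand by $p$, and applying Jensen's inequality to the convex function $\exp$ against the probability measure $\frac{1}{2\pi}P(r,\theta-t)\,dt$, I would obtain
\begin{equation*}
|F(re^{i\theta})|^{p}=\exp\left(\frac{1}{2\pi}\int_{0}^{2\pi}P(r,\theta-t)\,p\log|F^{*}(e^{it})|\,dt\right)\leq\frac{1}{2\pi}\int_{0}^{2\pi}P(r,\theta-t)\,|F^{*}(e^{it})|^{p}\,dt.
\end{equation*}

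Finally I would integrate this inequality in $\theta$, interchange the order of integration by Fubini (all integrands being non-negative), and use the normalization $\frac{1}{2\pi}\int_{0}^{2\pi}P(r,\theta-t)\,d\theta=1$ to get
\begin{equation*}
\frac{1}{2\pi}\int_{0}^{2\pi}|F(re^{i\theta})|^{p}\,d\theta\leq\frac{1}{2\pi}\int_{0}^{2\pi}|F^{*}(e^{it})|^{p}\,dt<\infty,
\end{equation*}
a bound independent of $r$. Hence $\sup_{0<r<1}\int_{0}^{2\pi}|F(re^{i\theta})|^{p}\,d\theta<\infty$, so $F\in H^{p}(\mathbb{D})$, and the pointwise estimate $|f|\leq|F|$ forces $f\in H^{p}(\mathbb{D})$ as well. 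I expect the only genuinely delicate point to be the justification of the Poisson-integral representation of $\log|F|$ and the attendant Jensen step: this is exactly where the hypothesis $f\in N^{+}$, rather than merely $f\in N$, is indispensable, since for a general function of class $N$ the representing measure of $\log|f|$ may carry a positive singular part, the displayed identity then fails, and the whole estimate breaks down. Everything after that point is a routine application of Fubini's theorem and the normalization of the Poisson kernel.
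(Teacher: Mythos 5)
Your proof is correct. Note that the paper does not prove this statement at all --- it is quoted verbatim as Theorem 2.11 of Duren's \emph{Theory of $H^{p}$ Spaces} and used as a black box in the approximation argument of Section 4; the argument you give (factor $f=BSF$, reduce to the outer factor via $|f|\leq|F|$ and $|f^{*}|=|F^{*}|$ a.e., then apply Jensen's inequality to the Poisson representation of $\log|F|$ and integrate) is precisely the standard proof appearing in Duren, including the correct identification of where the $N^{+}$ hypothesis, as opposed to mere membership in $N$, is needed.
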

Now we can give the approximation result for the Poletsky-Stessin
Hardy classes $H^{p}_{u}(\Omega)$:
\begin{theorem}
The algebra $A(\Omega)$ is dense in $H^{p}_{u}(\Omega)$, $1\leq
p<\infty$.
\end{theorem}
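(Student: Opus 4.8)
The plan is to combine the conformal transfer to the disc, the boundary value characterization of Theorem 2.1, and the factorization of Corollary 3.1, reducing the statement to two dominated-convergence arguments. Writing $\psi:\mathbb{D}\to\Omega$ for the conformal map and $\varphi=\psi^{-1}$, the Carath\'eodory theorem makes $\psi,\varphi$ extend to homeomorphisms of the closures, so $a\in A(\Omega)$ iff $a\circ\psi\in A(\mathbb{D})$; moreover, by Theorem 2.1 the map $f\mapsto f^{*}$ is an isometry of $H^{p}_{u}(\Omega)$ into $L^{p}(d\mu_{u})$, and pulling $\mu_{u}$ back to $\partial\mathbb{D}$ via $\hat\mu(E)=\mu_{u}(\psi(E))$ turns the problem into approximating $g:=f\circ\psi$ in $L^{p}(d\hat\mu)$ by disc-algebra functions. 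Since $\mu_{u}$ (equivalently $\hat\mu$) is a finite measure by hypothesis, every bounded holomorphic function lies in $H^{p}_{u}(\Omega)$, so it suffices to prove (A) bounded holomorphic functions are dense in $H^{p}_{u}(\Omega)$, and (B) every bounded holomorphic function is approximable in $H^{p}_{u}(\Omega)$ by functions of $A(\Omega)$.

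For step (A) I would use the factorization. Given $f\in H^{p}_{u}(\Omega)$, Corollary 3.1 writes $f=IF$ with $I$ $\Omega$-inner (so $|I^{*}|=1$ $\mu_{u}$-a.e. and $|I|\le 1$) and $F$ $\Omega$-outer, both in $H^{p}_{u}(\Omega)$. The modulus $k=|F^{*}|$ satisfies $\log k\in L^{1}$, and I truncate it: let $k_{N}=\min(k,N)$ and let $F_{N}$ be the outer function with boundary modulus $k_{N}$. Then $F_{N}\in H^{\infty}$, $|F_{N}^{*}|=k_{N}\le k=|F^{*}|$ pointwise, and $\log k_{N}\to\log k$ in $L^{1}$ (dominated by $|\log k|$), whence $F_{N}\to F$ pointwise in $\mathbb{D}$ and $F_{N}^{*}\to F^{*}$ $\sigma$-a.e. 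Because $|F_{N}^{*}-F^{*}|^{p}\le 2^{p}|F^{*}|^{p}\in L^{1}(d\mu_{u})$, dominated convergence gives $\|F_{N}-F\|_{H^{p}_{u}}\to 0$; multiplying by the bounded factor $I$ yields bounded functions $IF_{N}\to f$ in $H^{p}_{u}(\Omega)$. Here Theorem 4.1 is the natural tool confirming that outer functions built from truncated boundary data remain in $H^{p}(\mathbb{D})$ (hence, through Theorem 2.1, in $H^{p}_{u}$), although for the bounded truncations $F_{N}$ this membership is immediate.

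For step (B), let $h$ be bounded and holomorphic on $\Omega$. I dilate through the disc: put $h_{r}=(h\circ\psi)(r\,\varphi(\cdot))$ for $0<r<1$. Each $h_{r}$ is holomorphic on $\Omega$ and continuous on $\overline\Omega$ (since $w\mapsto (h\circ\psi)(rw)$ is holomorphic on a neighbourhood of $\overline{\mathbb{D}}$ and $\varphi$ extends continuously), so $h_{r}\in A(\Omega)$; also $\|h_{r}\|_{\infty}\le\|h\|_{\infty}$ and $h_{r}^{*}\to h^{*}$ $\sigma$-a.e. The constant $\|h\|_{\infty}$ lies in $L^{p}(d\mu_{u})$ because $\mu_{u}$ is finite, so dominated convergence gives $\|h_{r}-h\|_{H^{p}_{u}}\to 0$. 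Combining (A) and (B) by the triangle inequality produces, for every $f\in H^{p}_{u}(\Omega)$ and $\varepsilon>0$, a function of $A(\Omega)$ within $\varepsilon$ of $f$, which is the assertion.

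The crux is step (A), and the reason the factorization is unavoidable is the following obstruction. By Proposition 2.1, $d\mu_{u}=\beta\,d\sigma$ with $\beta\in L^{1}(d\sigma)$ but in general unbounded, so the weight is genuinely singular relative to arclength. Consequently, dilating $f$ directly fails: the Poisson regularizations of $|f^{*}|^{p}$ can push mass onto the set where $\beta$ is large, and one can produce $L^{1}$ data for which $\int (P_{r}*|f^{*}|^{p})\,d\mu_{u}$ does not tend to $\int|f^{*}|^{p}\,d\mu_{u}$, so no fixed $L^{1}(d\mu_{u})$-dominant for the dilates is available. The remedy above is to reduce first to bounded functions, where the outer-function truncation supplies the pointwise bound $|F_{N}^{*}|\le|F^{*}|$ that legitimizes dominated convergence, and only afterwards to dilate, where boundedness together with the finiteness of $\mu_{u}$ again furnishes a dominant. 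I therefore expect the outer truncation, and the verification that $F_{N}^{*}\to F^{*}$ a.e.\ (via $L^{1}$-convergence of the log-moduli and convergence in measure of the conjugate functions), to be the main technical point.
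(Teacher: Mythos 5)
Your argument is correct in outline, but it proves the theorem by a genuinely different route than the paper. The paper's proof for $p>1$ is a duality argument: a functional $L$ annihilating $A(\Omega)$ is represented as $L(f)=\int_{\partial\Omega}f^{*}\overline{g^{*}}\,d\mu_{u}$ with $g^{*}\in L^{q}(d\mu_{u})$; the generalized F.\ and M.\ Riesz theorem (together with the density of $A(\Omega)$ in $C(\partial\Omega)$) forces the annihilating measure $\overline{g^{*}}\,d\mu_{u}$ to be absolutely continuous with respect to harmonic measure $d\mu_{g_{\Omega,0}}$, its Radon--Nikodym derivative is shown to be the boundary value of an $H^{1}(\Omega)$ function $h$ with $h(0)=0$, and then $L(\alpha)=\alpha h(0)=0$ for every $\alpha\in H^{p}_{u}(\Omega)$ by the mean value property. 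The case $p=1$ is then handled by writing $f=gh$ with $g,h\in H^{2}_{u}(\Omega)$ and approximating each factor. Your proof is instead constructive and uniform in $p\in[1,\infty)$: reduce to $H^{\infty}$ via the inner--outer factorization and truncation of the outer boundary modulus, then reduce $H^{\infty}$ to $A(\Omega)$ by dilation through the conformal map. What your route buys is the avoidance of the uniform-algebra machinery (representing measures, F.\ and M.\ Riesz) and of the separate treatment of $p=1$; what it costs is the technical point you yourself flag, namely that $F_{N}^{*}\to F^{*}$ almost everywhere on the boundary. The moduli converge trivially, but the phases are conjugate functions of $\log k_{N}$, and $L^{1}$-convergence of $\log k_{N}\to\log k$ only gives convergence in measure of the conjugates (Kolmogorov's weak-type estimate), hence a.e.\ convergence along a subsequence --- which suffices for density, or one can invoke Vitali's theorem using the uniform integrability of $|F_{N}^{*}|^{p}\le|F^{*}|^{p}$. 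You should state explicitly that $\sigma$-a.e.\ statements transfer to $\mu_{u}$-a.e.\ statements (Proposition 2.1 plus the Carath\'eodory bounds on $|\psi'|$), since both dominated convergence steps silently use this. With those two points made precise, your argument is a complete and somewhat more elementary alternative to the one in the paper.
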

\begin{proof}
(\textbf{Case 1: $p>1$})Let $L$ be a linear functional on
$H^{p}_{u}(\Omega)$ such that $L$ vanishes on $A(\Omega)$. Then
$L(f)=\int_{\partial\Omega}f^{*}\overline{g^{*}}d\mu_{u}$ for some
non-zero $g\in L^{q}(d\mu_{u})$ hence
$\int_{\partial\Omega}\gamma\overline{g^{*}}d\mu_{u}=0$ for all
$\gamma\in A(\Omega)$ (*). Now we need the following lemma:
\begin{lemma}
Let $\mu$ be a measure on the boundary $\partial\Omega$ of $\Omega$
which is orthogonal to $A(\Omega)$. Then $\mu$ is absolutely
continuous with respect to $d\mu_{g_{\Omega,0}}$ which is the
boundary Monge-Amp\'ere measure with respect to the Green function
with pole at $0$.
\end{lemma}
\begin{proof}
The homomorphism "evaluation at $0$" of $A(\Omega)$ has the
representing measure $d\mu_{g_{\Omega,0}}$ on $\partial\Omega$. Then
by generalized F. and M. Riesz Theorem (Theorem 7.6, \cite{game})
the singular part $\mu_{s}$ of $\mu$ with respect to
$d\mu_{g_{\Omega,0}}$ is orthogonal to $A(\Omega)$ that is
$\int_{\partial\Omega}fd\mu_{s}=0$ for all $f\in A(\Omega)$ but on a
domain like $\Omega$, $A(\Omega)$ is dense in $C(\partial\Omega)$
(Theorem 2.7, \cite{game}) so $\int_{\partial\Omega}gd\mu_{s}=0$ for
all $g\in C(\partial\Omega)$. Hence $d\mu_{s}=0$ and
$d\mu<<d\mu_{g_{\Omega,0}}$.
\end{proof}
Now by the above lemma and (*) we have
$\overline{g^{*}}d\mu_{u}<<d\mu_{g_{\Omega,0}}$ so by Radon-Nikodym
Theorem we have $\overline{g^{*}}d\mu_{u}=h^{*}d\mu_{g_{\Omega,0}}$
for some $h^{*}\in L^{1}(d\mu_{g_{\Omega,0}})$ and on a domain like
$\Omega$ we have $c_{1}d\mu_{g_{\Omega,0}}\leq d\sigma\leq
c_{2}d\mu_{g_{\Omega,0}}$ so we have $h^{*}\in L^{1}(d\sigma)$. Now
consider the function $H^{*}=h^{*}\circ\psi$ on $\partial\mathbb{D}$
then since $h^{*}\in L^{1}(d\sigma)$ we have $h^{*}\in
L^{1}(\partial\mathbb{D})$, and since $\psi(0)=0$ and
$\mu_{g_{\Omega,0}}$ is in fact the harmonic measure we have
$d\mu_{g_{\mathbb{D},0}}(e^{i\theta})=d\mu_{g_{\Omega,0}}(\psi(e^{i\theta}))$
and using $\varphi=\psi^{-1}$ we get
\begin{equation*}
\int_{\partial\mathbb{D}}e^{in\theta}(h^{*}\circ\psi(e^{i\theta}))d\theta=\int_{\partial\Omega}(\varphi(z))^{n}h^{*}(z)d\mu_{g_{\Omega,0}}(z)=\int_{\partial\Omega}(\varphi(z))^{n}\overline{g^{*}}d\mu_{u}=0
\end{equation*}for all $n$ as a consequence of (*). Hence $H^{*}$ is
the boundary value of an $H^{1}(\mathbb{D})$ function $H$ then $h$
which is defined as $H=h\circ\psi$ is in the class $E^{1}(\Omega)$
by the corollary of (Theorem 10.1, \cite{dur}). Moreover since
$E^{1}(\Omega)=H^{1}(\Omega)$ we have $h\in H^{1}(\Omega)$ and since
$\psi(0)=0$ we have $h(0)=0$.\\Now take $\alpha\in
H^{p}_{u}(\Omega)$ and consider the analytic function $\alpha h$
\begin{equation*}
\int_{\partial\Omega}|\alpha^{*}h^{*}|^{\frac{1}{2}}d\sigma\leq\left(\int_{\partial\Omega}|\alpha^{*}|d\sigma\right)^{\frac{1}{2}}\left(\int_{\partial\Omega}|h^{*}|d\sigma\right)^{\frac{1}{2}}\leq\|\alpha\|_{H^{1}}^{\frac{1}{2}}\|h\|_{H^{1}}^{\frac{1}{2}}
\end{equation*}since $h\in H^{1}(\Omega)$ and $\alpha\in H^{p}_{u}(\Omega)\subset H^{p}(\Omega)\subset
H^{1}(\Omega)$ so $\alpha h\in H^{\frac{1}{2}}(\Omega)$. On the
other hand
\begin{equation*}
\int_{\partial\Omega}|\alpha^{*}h^{*}|d\sigma\leq
c_{2}\int_{\partial\Omega}|\alpha^{*}||h^{*}|d\mu_{g_{\Omega,0}}=c_{2}\int_{\partial\Omega}|\alpha^{*}||\overline{g^{*}}|d\mu_{u}\leq\left(\int_{\partial\Omega}|\alpha^{*}|^{p}d\mu_{u}\right)^{\frac{1}{p}}\left(\int_{\partial\Omega}|g^{*}|^{q}d\mu_{u}\right)^{\frac{1}{q}}<\infty
\end{equation*}since $\alpha\in H^{p}_{u}(\Omega)$ and $g^{*}\in L^{q}(d\mu_{u})$.
Hence we have $\alpha^{*}h^{*}\in L^{1}(\partial\Omega)$. Now since
$\alpha h\in H^{\frac{1}{2}}(\Omega)$ by the corollary of (Theorem
10.1, \cite{dur}) the function $AH=\alpha
h(\psi(w))[\psi'(w)]^{2}\in H^{\frac{1}{2}}(\mathbb{D})\subset
N^{+}$ but since $\alpha^{*}h^{*}\in L^{1}(\partial\Omega)$ ,
$AH^{*}\in L^{1}(\partial\mathbb{D})$ hence again by the corollary
of (Theorem 10.1, \cite{dur}) we have $\alpha h\in H^{1}(\Omega)$.
Finally
\begin{equation*}
0=\alpha h(0)=\int_{\partial\Omega}\alpha
hd\mu_{g_{\Omega,0}}=\int_{\partial\Omega}\alpha\overline{g^{*}}d\mu_{u}=L(\alpha)
\end{equation*}for all $\alpha\in H^{p}_{u}(\Omega)$ hence
$A(\Omega)$ is dense in $H^{p}_{u}(\Omega)$.\\
(\textbf{Case 2: $p=1$}): By the previous corollary we know that if
$f\in H^{1}_{u}(\Omega)$ then we can factor it out like $f=gh$ where
$g,h\in H^{2}_{u}(\Omega)$ and from the first part of the proof we
know there exists sequences $\{g_{n}\},\{h_{n}\}\in A(\Omega)$ such
that $\{g_{n}\}\rightarrow g$ and $\{h_{n}\}\rightarrow h$ in
$H^{2}_{u}(\Omega)$. Now for $f$ consider the sequence
$\{g_{n}h_{n}\}$ then,
\begin{equation*}
\int_{\partial\Omega}|f-g_{n}h_{n}|d\mu_{u}=\int_{\partial\Omega}|gh-g_{n}h_{n}|d\mu_{u}=\int_{\partial\Omega}|gh-gh_{n}+gh_{n}-g_{n}h_{n}|d\mu_{u}
\end{equation*}
\begin{equation*}
\leq
\left(\int_{\partial\Omega}|g|^{2}d\mu_{u}\right)^{\frac{1}{2}}\left(\int_{\partial\Omega}|h-h_{n}|^{2}d\mu_{u}\right)^{\frac{1}{2}}+\left(\int_{\partial\Omega}|h_{n}|^{2}d\mu_{u}\right)^{\frac{1}{2}}\left(\int_{\partial\Omega}|g-g_{n}|^{2}d\mu_{u}\right)^{\frac{1}{2}}
\end{equation*}But right hand side goes to $0$ since $\{g_{n}\}\rightarrow g$ and $\{h_{n}\}\rightarrow h$ in
$H^{2}_{u}(\Omega)$ hence $\{g_{n}h_{n}\}\in A(\Omega)$ converges to
$f$ in $H^{1}_{u}(\Omega)$.\\
Combining these two cases we see that $A(\Omega)$ is dense in
$H^{p}_{u}(\Omega)$ for $1\leq p<\infty$.

\end{proof}

\section {Composition Operators With Analytic Symbols}
Let $\phi:\Omega\rightarrow\Omega$ be a holomorphic self map of
$\Omega$. The linear composition operator induced by the symbol
$\phi$ is defined by $C_{\phi}(f)=f\circ\phi$, $f\in
\mathcal{O}(\Omega)$. In 2003 Shapiro and Smith (\cite{shapsm})
showed that every holomorphic self map $\phi$ of $\Omega$ induces a
bounded composition operator on the classical Hardy space
$H^{p}(\Omega)$. Moreover we know that being in the class
$H^{p}_{v}(\Omega)$ where $v$ is harmonic out of a compact set is
equivalent to having a harmonic majorant hence any composition
operator on a Hardy class generated by this sort of exhaustion
function is also bounded. As a consequence of Closed Graph Theorem
continuity of a composition operator on $H^{p}_{u}(\Omega)$ is in
fact determined by whether it takes functions from
$H^{p}_{u}(\Omega)$ to $H^{p}_{u}(\Omega)$ or not. However this does
\textbf{not} always hold when exhaustion function has finite
Monge-Amp\'ere mass but not harmonic out of a compact set as we see
from the following example :
\begin{ex}
Suppose $u$ is the
exhaustion function that we constructed in Theorem 1.3 then we know
again from the proof of Theorem 1.3 that the function
$\frac{1}{(z-1)^{\frac{3}{4}}}\notin H^{1}_{u}(\mathbb{D})$. Now
consider the operator with symbol $\phi(z)=ze^{i\frac{\pi}{2}}$, and
take the function $f(z)=\frac{1}{(z-i)^{\frac{3}{4}}}$, then
\begin{equation*}
\|f\|_{H^{1}_{u}(\mathbb{D})}=\int_{\partial\mathbb{D}}\frac{1}{(\xi-i)^{\frac{3}{4}}}\beta(\xi)d\sigma(\xi)<\infty
\end{equation*}since the singularities of $\beta(\xi)$ and $f^{*}(\xi)$ do not overlap and they are both integrable functions
on the boundary so $f(z)\in H^{1}_{u}(\mathbb{D})$ but
\begin{equation*}
C_{\phi}(f)=f\circ\phi=\frac{1}{e^{i\frac{3\pi}{8}}(z-1)^{\frac{3}{4}}}
\end{equation*} and $C_{\phi}(f)\notin
H^{1}_{u}(\mathbb{D})$. Therefore not every composition operator is
bounded on Poletsky-Stessin Hardy classes even though the symbol
function is a nice and simple one like in our example, namely a
rotation.
\end{ex}
In the next result we will examine the necessary and sufficient
conditions for the composition operator $C_{\varphi}$ to be bounded
on this rather interesting space $H^{p}_{u}(\mathbb{D})$ where $u$
is the exhaustion function constructed in the proof of Theorem 1.3
and $\varphi$ is an automorphism of the unit disc:
\begin{theorem}
Let $\varphi$ be a Mobius transformation such that
$\varphi(z)=e^{i\theta}\frac{z-a}{1-\overline{a}z}$ where
$a\in\mathbb{D}$ and $u$ is the exhaustion function constructed in
the proof of Theorem 1.3. Then
the following are equivalent:\\
(i) $C_{\varphi}$ is a bounded operator on the space
$H^{p}_{u}(\mathbb{D})$ \\
(ii) There exists a constant $K>0$ such that
$\int_{E}\beta(\varphi^{*-1}(\eta))d\sigma(\eta)\leq
K\int_{E}\beta(\eta)d\sigma(\eta)$ for all measurable
$E\subset\partial\mathbb{D}$ where $d\mu_{u}=\beta
d\sigma$\\
(iii) $\varphi(1)=1$
\end{theorem}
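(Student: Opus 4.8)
My plan is to transfer everything to the boundary circle via Theorem 2.1 and then reduce the whole statement to one analytic fact: the precise rate at which the density $\beta$ of $d\mu_u=\beta\,d\sigma$ blows up at the point $1$. First I would record that, since $\varphi$ is a M\"obius automorphism, $f\circ\varphi\in H^p(\mathbb{D})$ whenever $f\in H^p(\mathbb{D})$ and $(f\circ\varphi)^*=f^*\circ\varphi^*$ a.e.; hence by Theorem 2.1
\begin{equation*}
\|C_\varphi f\|_{H^p_u}^p=\int_{\partial\mathbb{D}}|f^*\circ\varphi^*|^p\,\beta\,d\sigma
=\int_{\partial\mathbb{D}}|f^*|^p\,\bigl(\beta\circ\varphi^{*-1}\bigr)\,|(\varphi^{*-1})'|\,d\sigma ,
\end{equation*}
the last step being the change of variable $\zeta=\varphi^*(\eta)$. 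Because $\varphi$ is M\"obius, the boundary Jacobian $|(\varphi^{*-1})'|=\tfrac{1-|a|^2}{|1-\bar a\,\zeta|^2}$ is bounded above and below by positive constants, so it can be absorbed into every constant below. Thus all three conditions are governed by $\beta(\xi)=\int_{\mathbb D}P(z,\xi)\,dd^cu(z)$, and I would prove the equivalences by the cycle (ii)$\Rightarrow$(i)$\Rightarrow$(iii)$\Rightarrow$(ii).

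The technical heart, and the step I expect to be the main obstacle, is an asymptotic lemma for $\beta$: \emph{$\beta$ is continuous and strictly positive on $\partial\mathbb{D}\setminus\{1\}$, and $\beta(\xi)\asymp|1-\xi|^{-1/2}$ as $\xi\to1$.} To establish this I would insert $dd^cu=\tfrac14(1-x)^{-5/4}\,dx\,dy$ (from Theorem 1.3) into the Poisson integral, localize to a neighborhood of $z=1$, pass to boundary-fitted coordinates $1-x=a,\ y=-b$ with $b^2<2a$, and write $\xi=e^{i\phi}$ so that $|z-\xi|^2\asymp a^2+(b+\phi)^2$ and $1-|z|^2\asymp 2a$. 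Splitting the $a$-integral at $a\sim\phi^2$, the region $a\gtrsim\phi^2$ yields $\int_{\phi^2}^{1}a^{-5/4}\,da\asymp|\phi|^{-1/2}$ (the dominant term), while $a\lesssim\phi^2$ contributes only $O(|\phi|^{1/2})$; continuity and positivity away from $1$ follow from $\beta$ being the Poisson integral of the finite positive mass $dd^cu$. This two-sided estimate is exactly what couples the singularity of $\beta$ to the fixed point of $\varphi$ on $\partial\mathbb{D}$.

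With the lemma in hand the remaining implications are short. For (iii)$\Rightarrow$(ii): if $\varphi(1)=1$ then $\varphi^{*-1}$ fixes $1$, and conformality gives $|1-\varphi^{*-1}(\eta)|\asymp|1-\eta|$ near $1$, so $\beta\circ\varphi^{*-1}\asymp\beta$ there by the lemma; on $\{|\eta-1|\ge\delta\}$ the set $\varphi^{*-1}(\{|\eta-1|\ge\delta\})$ is compact and avoids $1$, so both $\beta$ and $\beta\circ\varphi^{*-1}$ are bounded above and below, giving $\beta\circ\varphi^{*-1}\le K\beta$, which is (ii). For (ii)$\Rightarrow$(i): (ii) is equivalent to $\beta\circ\varphi^{*-1}\le K\beta$ a.e., so the displayed identity together with the bounded Jacobian yields $\|C_\varphi f\|_{H^p_u}^p\le K'\|f\|_{H^p_u}^p$ for \emph{all} $f\in H^p_u(\mathbb{D})$, i.e. boundedness.

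Finally I would prove (i)$\Rightarrow$(iii) by contraposition, generalizing Example 5.1. Suppose $\zeta_1:=\varphi(1)\neq1$ and choose $s$ with $sp\in[\tfrac12,1)$, which is possible for every $p\ge1$; set $f(z)=(1-\bar\zeta_1 z)^{-s}$, so $f\in H^p(\mathbb{D})$ and $|f^*(\xi)|\asymp|\xi-\zeta_1|^{-s}$ is singular only at $\zeta_1\neq1$, where $\beta$ is bounded; since $sp<1$ this gives $f^*\in L^p(d\mu_u)$, i.e. $f\in H^p_u(\mathbb{D})$. On the other hand $|f\circ\varphi(\xi)|\asymp|\varphi(\xi)-\zeta_1|^{-s}\asymp|\xi-1|^{-s}$ concentrates its singularity at $1$, and by the lemma
\begin{equation*}
\|C_\varphi f\|_{H^p_u}^p\gtrsim\int_{|\xi-1|<\delta}|\xi-1|^{-sp}\,|1-\xi|^{-1/2}\,d\sigma(\xi)=\infty
\end{equation*}
because $sp+\tfrac12\ge1$; hence $C_\varphi f\notin H^p_u(\mathbb{D})$ and $C_\varphi$ is not bounded. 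This closes the cycle and proves the equivalence of (i), (ii) and (iii).
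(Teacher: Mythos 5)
Your proposal is correct, and while it transfers the problem to the boundary exactly as the paper does (via Theorem 2.1 and the change of variables $\zeta=\varphi^{*}(\eta)$ with bounded Jacobian), its logical architecture and its key ingredient differ in a way worth noting. The paper proves $(i)\Leftrightarrow(ii)$ and $(i)\Leftrightarrow(iii)$ as two separate biconditionals: for $(i)\Rightarrow(ii)$ it tests boundedness on $f\equiv1$ (which as written only yields the global mass inequality rather than the set-wise inequality in (ii)), and for $(iii)\Rightarrow(i)$ it asserts without computation that $\beta(\varphi^{-1}(\eta))/\beta(\eta)$ is bounded when $\varphi(1)=1$. You instead isolate the quantitative lemma $\beta(\xi)\asymp|1-\xi|^{-1/2}$ as $\xi\to1$ (your splitting of the $a$-integral at $a\sim\phi^{2}$ checks out: the region $a\gtrsim\phi^{2}$ gives $\int_{\phi^{2}}^{1}a^{-5/4}\,da\asymp|\phi|^{-1/2}$ and the complementary region is $O(|\phi|^{1/2})$), and you arrange the implications in the cycle $(ii)\Rightarrow(i)\Rightarrow(iii)\Rightarrow(ii)$. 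This buys you two things the paper leaves implicit: the boundedness of $\beta\circ\varphi^{*-1}/\beta$ under $\varphi(1)=1$ becomes a transparent consequence of the asymptotic plus conformality at the fixed point, and the delicate implication $(i)\Rightarrow(ii)$ is obtained for free through $(iii)$ rather than by the under-argued test-function step. Your $(i)\Rightarrow(iii)$ by contraposition, with $f(z)=(1-\bar\zeta_{1}z)^{-s}$ and $sp\in[\tfrac12,1)$, also generalizes cleanly to all $p\ge1$ where the paper only treats $p=1$ with the exponent $\tfrac34$ and routes through $\varphi^{-1}$. The only point to make explicit is that $\beta$ is bounded above on compact subsets of $\partial\mathbb{D}\setminus\{1\}$ and bounded below on all of $\partial\mathbb{D}$ (the latter because $P(z,\xi)\ge c_{K}>0$ uniformly for $z$ in a fixed compact set carrying positive $dd^{c}u$-mass), which you use but only gesture at; both follow easily from the formula $\beta(\xi)=\int_{\mathbb{D}}P(z,\xi)\,dd^{c}u(z)$.
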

\begin{proof}
(It is sufficient to prove the result for $p=1$)\\
$(i\Leftrightarrow ii)$ Let $f\in H^{p}_{u}(\mathbb{D})$ and
$\varphi$ be a Mobius transformation then
\begin{equation*}
\|f\circ\varphi\|_{H^{p}_{u}(\mathbb{D})}=\int_{\partial\mathbb{D}}|f^{*}\circ\varphi^{*}|\beta(\xi)d\sigma(\xi)=\int_{\partial\mathbb{D}}|f^{*}(\eta)|\beta(\varphi^{*-1}(\eta))|(\varphi^{*-1})'|d\sigma(\eta)
\end{equation*}
 Suppose $C_{\varphi}$ is bounded on
$H^{p}_{u}(\mathbb{D})$ then
$\|f\circ\varphi\|_{H^{p}_{u}(\mathbb{D})}\leq
M\|f\|_{H^{p}_{u}(\mathbb{D})}$ for all $f\in
H^{p}_{u}(\mathbb{D})$. Now since bounded functions are in
$H^{p}_{u}(\mathbb{D})$ we have $f(z)\equiv1\in
H^{p}_{u}(\mathbb{D})$ and we will write the above inequality for
$f(z)\equiv1$. Since $|(\varphi^{*-1})'|<N<\infty$ on
$\partial\mathbb{D}$ we get
\begin{equation*}
\int_{\partial\mathbb{D}}\beta(\varphi^{*-1}(\eta))|(\varphi^{*-1})'|d\sigma(\eta)\leq
N\int_{\partial\mathbb{D}}\beta(\varphi^{*-1}(\eta))d\sigma(\eta)\leq
NM\int_{\partial\mathbb{D}}\beta(\eta)d\sigma(\eta)
\end{equation*}
For the converse direction suppose that there exists a constant
$K>0$ such that
$\int_{\partial\mathbb{D}}\beta(\varphi^{*-1}(\eta))d\sigma(\eta)\leq
K\int_{\partial\mathbb{D}}\beta(\eta)d\sigma(\eta)$ for all
measurable $E\subset\partial\mathbb{D}$. Then for any characteristic
function $\chi_{E}$, $E\subset\partial\mathbb{D}$ we have
\begin{equation*}
\int_{\partial\mathbb{D}}\chi_{E}\beta(\varphi^{*-1}(\eta))|(\varphi^{*-1})'|d\sigma(\eta)=\int_{E}\beta(\varphi^{*-1}(\eta))d\sigma(\eta)\leq
K\int_{E}\beta(\eta)d\sigma(\eta)
\end{equation*}
\begin{equation*}
=K\int_{\partial\mathbb{D}}\chi_{E}\beta(\eta)d\sigma(\eta)
\end{equation*}
Hence by monotone convergence theorem for any positive integrable
function $g$ we have
\begin{equation*}
\int_{\partial\mathbb{D}}g(\eta)\beta(\varphi^{*-1}(\eta))|(\varphi^{*-1})'|d\sigma(\eta)\leq
K\int_{\partial\mathbb{D}}g(\eta)\beta(\eta)d\sigma(\eta)
\end{equation*}so
\begin{equation*}
\|f\circ\varphi\|_{H^{p}_{u}(\mathbb{D})}=\int_{\partial\mathbb{D}}|f^{*}|^{p}\beta(\varphi^{*-1}(\eta))|(\varphi^{*-1})'|d\sigma(\eta)\leq
N\int_{\partial\mathbb{D}}|f^{*}|^{p}\beta(\varphi^{*-1}(\eta))d\sigma(\eta)
\end{equation*}
\begin{equation*}
\leq
C\int_{\partial\mathbb{D}}|f^{*}|^{p}\beta(\eta)d\sigma(\eta)=C\|f\|_{H^{p}_{u}(\mathbb{D})}
\end{equation*}
Hence $C_{\varphi}$ is bounded.\\
$(i\Leftrightarrow iii)$ Suppose $C_{\varphi}$ is bounded and
$\varphi(1)\neq1$ then $\exists \xi\in\partial\mathbb{D}$ ,
$\xi\neq1$ such that $\varphi(\xi)=1$ and take the function
$f(z)=\frac{1}{(1-z)^{\frac{3}{4}}}$ then we know that $f(z)\notin
H^{1}_{u}(\mathbb{D})$. Now consider the function
$F(z)=f\circ\varphi^{-1}(z)$ then
\begin{equation*}
\|F(z)\|_{H^{1}_{u}(\mathbb{D})}=\int_{\partial\mathbb{D}}\frac{1}{|1-\varphi^{-1}(\eta)|^{\frac{3}{4}}}d\mu_{u}(\eta)
\end{equation*}
\begin{equation*}
=\int_{\partial\mathbb{D}\setminus
B_{\gamma}(1)}\frac{1}{|1-\varphi^{-1}(\eta)|^{\frac{3}{4}}}d\mu_{u}(\eta)+\int_{B_{\gamma}(1)}\frac{1}{|1-\varphi^{-1}(\eta)|^{\frac{3}{4}}}d\mu_{u}(\eta)<\infty
\end{equation*} for some $\gamma>0$. The first integral in the last
line is bounded because on $\partial\mathbb{D}\setminus
B_{\gamma}(1)$, $d\mu_{u}$ and $d\sigma$ are mutually absolutely
continuous and $\frac{1}{|1-\varphi^{-1}(\eta)|^{\frac{3}{4}}}$ is
$d\sigma$ integrable and the second integral is bounded because on
$B_{\gamma}(1)$, $\frac{1}{|1-\varphi^{-1}(\eta)|^{\frac{3}{4}}}$ is
a bounded function and hence it is $d\mu_{u}$ integrable.\\
Hence $F(z)\in H^{1}_{u}(\mathbb{D})$ but $F\circ\varphi=f\notin
H^{1}_{u}(\mathbb{D})$ which contradicts with the boundedness of
$C_{\varphi}$.\\
Suppose now $\varphi(1)=1$ from the $(i\Leftrightarrow ii)$ part of
the proof we know that if
$\frac{\beta(\varphi^{-1}(\eta))}{\beta(\eta)}<M<\infty$ then
$C_{\varphi}$ is bounded and for the case $\varphi(1)=1$ we have
$\frac{\beta(\varphi^{-1}(\eta))}{\beta(\eta)}$ bounded hence the
result follows.
\end{proof}
We can generalize this result to a slightly wider class of symbols
as follows:
\begin{theorem}
Let $\varphi:\overline{\mathbb{D}}\rightarrow \overline{\mathbb{D}}$
be a locally univalent self map of $\mathbb{D}$ such that $\varphi$
is differentiable in a neighborhood of $\overline{\mathbb{D}}$. Then
$C_{\varphi}$ is bounded on $H^{1}_{u}(\mathbb{D})$ if and only if
$\varphi(1)=1$ and $N^{\varphi}_{\beta}(\eta)\leq K\beta(\eta)$ for
some $K>0$ and all $\eta\in\partial\mathbb{D}$ where
$N^{\varphi}_{\beta}(\eta)=\sum_{j\geq1}\beta(\xi_{j}(\eta))$ and
$\{\xi_{j}(\eta)\}$ are the zeros of $\varphi(z)-\eta$.
\end{theorem}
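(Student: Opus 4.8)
The plan is to mirror the proof of Theorem 5.1, replacing the single-valued substitution $\eta=\varphi^{*}(\xi)$ used there (legitimate because a M\"obius map is a boundary diffeomorphism) by the change-of-variables formula appropriate to a finitely-many-to-one boundary map. First I would record the geometric facts I need. Since $\varphi$ is holomorphic and nonconstant on $\mathbb{D}$ with $\varphi(\mathbb{D})\subseteq\mathbb{D}$, the maximum principle forces any solution $\xi$ of $\varphi(\xi)=\eta$ with $\eta\in\partial\mathbb{D}$ to lie on $\partial\mathbb{D}$; thus the zeros $\{\xi_{j}(\eta)\}$ of $\varphi(z)-\eta$ entering $N^{\varphi}_{\beta}$ are exactly the boundary fiber $\varphi^{-1}(\eta)\cap\partial\mathbb{D}$. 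Local univalence ($\varphi'\neq0$) together with differentiability on a neighborhood of the compact set $\overline{\mathbb{D}}$ gives $0<m\leq|\varphi'|\leq M<\infty$ on $\partial\mathbb{D}$, so $\varphi|_{\partial\mathbb{D}}$ is a finite covering with finite fibers. Finally, since $H^{p}_{u}(\mathbb{D})\subset H^{p}(\mathbb{D})$ and $C_{\varphi}$ is bounded on the classical space by Shapiro--Smith, $f\circ\varphi\in H^{p}(\mathbb{D})$ with $(f\circ\varphi)^{*}=f^{*}\circ\varphi^{*}$, so the norm may be computed on the boundary through Theorem 2.1.

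The heart of the argument is the substitution. Using $d\mu_{u}=\beta\,d\sigma$ and the change-of-variables formula for the finite-to-one boundary map (whose arc-length Jacobian is $|\varphi'|$),
\begin{equation*}
\|f\circ\varphi\|_{H^{1}_{u}(\mathbb{D})}=\int_{\partial\mathbb{D}}|f^{*}(\varphi^{*}(\xi))|\beta(\xi)\,d\sigma(\xi)=\int_{\partial\mathbb{D}}|f^{*}(\eta)|\Big(\sum_{j\geq1}\frac{\beta(\xi_{j}(\eta))}{|\varphi'(\xi_{j}(\eta))|}\Big)\,d\sigma(\eta).
\end{equation*}
Because $m\leq|\varphi'|\leq M$, the weight in parentheses is comparable to $N^{\varphi}_{\beta}(\eta)=\sum_{j}\beta(\xi_{j}(\eta))$ with constants depending only on $m,M$, which can be absorbed into $K$. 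The equivalence of boundedness with the quantitative bound then follows as in the $(i)\Leftrightarrow(ii)$ step of Theorem 5.1: the implication $N^{\varphi}_{\beta}\leq K\beta\Rightarrow\|f\circ\varphi\|_{H^{1}_{u}}\leq C\|f\|_{H^{1}_{u}}$ is immediate from the displayed identity together with $\int|f^{*}|\beta\,d\sigma=\|f\|_{H^{1}_{u}}$, while for the converse I would test the boundedness inequality against moduli of bounded outer functions concentrating on a set $E$ (each such function lies in $H^{1}_{u}$ by the boundary characterization of Theorem 2.1) to obtain $\int_{E}N^{\varphi}_{\beta}\,d\sigma\leq K\int_{E}\beta\,d\sigma$ for all measurable $E$, hence the pointwise inequality $\beta$-a.e.

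It remains to see why $\varphi(1)=1$ appears, and this is governed by the singular structure of $\beta$. From the construction in Theorem 1.3, $dd^{c}u$ has density $(1-x)^{-5/4}$ concentrated at the boundary point $1$, so $\beta(\xi)=\int_{\mathbb{D}}P(z,\xi)\,dd^{c}u(z)$ blows up as $\xi\to1$ and stays finite and bounded away from $1$. If $\varphi(1)=\eta_{0}\neq1$, then by local univalence one preimage branch satisfies $\xi_{j}(\eta)\to1$ as $\eta\to\eta_{0}$, so $N^{\varphi}_{\beta}(\eta)\geq\beta(\xi_{j}(\eta))\to\infty$ while $\beta(\eta_{0})<\infty$; thus the quantitative bound is impossible near $\eta_{0}$, and concretely $F=f\circ\varphi^{-1}$ built from the branch near $\eta_0$ and $f(z)=(1-z)^{-3/4}$ (the function shown not to lie in $H^{1}_{u}$ in Theorem 1.3) gives $F\in H^{1}_{u}(\mathbb{D})$ but $F\circ\varphi=f\notin H^{1}_{u}(\mathbb{D})$, exactly as in the $(i)\Leftrightarrow(iii)$ argument of Theorem 5.1. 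When $\varphi(1)=1$, by contrast, $\varphi$ is a local conformal map fixing $1$, so $|1-\varphi(\xi)|\asymp|1-\xi|$ near $1$, the lone singularity of $\beta$ sits over itself and the ratio $\beta(\xi_{j})/\beta(\eta)$ stays bounded there, so the quantitative bound becomes attainable and delivers boundedness through the displayed identity. The main obstacle I anticipate is making the preimage-sum change of variables rigorous for the merely finitely-many-to-one boundary map and controlling the several branches uniformly near the singular point $1$; the two-sided bound $m\leq|\varphi'|\leq M$ is precisely what keeps the branch count, the Jacobians, and the comparison $\beta(\xi_{j})\asymp\beta(\eta)$ near $1$ under control.
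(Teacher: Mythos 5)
Your proposal is essentially correct, and its core device --- the preimage-sum change of variables
$\int_{\partial\mathbb{D}}|f^{*}\circ\varphi|\,\beta\,d\sigma=\int_{\partial\mathbb{D}}|f^{*}(\eta)|\sum_{j}\beta(\xi_{j}(\eta))\,|\varphi'(\xi_{j}(\eta))|^{-1}\,d\sigma(\eta)$
over arcs of univalence, with the Jacobian absorbed via $0<m\leq|\varphi'|\leq M$ --- is exactly the paper's mechanism for the sufficiency direction. Where you genuinely diverge is the necessity direction. The paper does not derive $N^{\varphi}_{\beta}\leq K\beta$ by systematic testing; it argues by contradiction with the single explicit family $f(z)=(\xi-z)^{-3/4}$ with $\xi=\varphi(1)$ (respectively $\xi=\eta_{0}$), using that $(1-z)^{-3/4}\notin H^{1}_{u}(\mathbb{D})$ from Theorem 1.3 and that $|\xi-\varphi(\eta)|\leq C|1-\eta|$ near $1$, so that a fiber meeting the singular point $1$ forces $f\circ\varphi\notin H^{1}_{u}$. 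Your route --- testing $C_{\varphi}$ against outer functions with near-indicator boundary modulus to get $\int_{E}N^{\varphi}_{\beta}\,d\sigma\leq K\int_{E}\beta\,d\sigma$ for every measurable $E$, and then reading off $\varphi(1)=1$ from the blow-up of $\beta$ at $1$ --- is arguably cleaner and delivers the quantitative condition in full ($\sigma$-a.e.) strength, whereas the paper's contradiction argument only excludes fibers through the singular point. Two cautions. First, drop the side remark constructing $F=f\circ\varphi^{-1}$ ``from the branch near $\eta_{0}$'': for a merely locally univalent $\varphi$ there is no global inverse, so that $F$ is not a well-defined element of $H^{1}_{u}(\mathbb{D})$; your measure-theoretic argument already does the job, and the paper's substitute is the direct computation that $f\circ\varphi\notin H^{1}_{u}(\mathbb{D})$ for $f(z)=(\varphi(1)-z)^{-3/4}$. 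Second, both you and the paper tacitly assume $\varphi(\partial\mathbb{D})\subseteq\partial\mathbb{D}$, so that the fibers $\{\xi_{j}(\eta)\}$ lie on the circle and the arcs $\Omega_{j}$ cover $\partial\mathbb{D}$ up to a null set; this does not follow from the stated hypotheses (consider $\varphi(z)=z/2$) and should be made explicit before the covering argument is launched.
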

\begin{proof}
$(\Rightarrow)$ Suppose $C_{\varphi}$ is bounded and
$\varphi(1)\neq1$ then there exists a $\xi\in\partial\mathbb{D}$
such that $\varphi(1)=\xi$ now consider the function
$f(z)=\frac{1}{(\xi-z)^{\frac{3}{4}}}$, $f(z)\in
H^{1}_{u}(\mathbb{D})$ and
\begin{equation*}
\|f\circ\varphi\|_{H^{1}_{u}(\mathbb{D})}=\int_{\partial\mathbb{D}}|f^{*}\circ\varphi|d\mu_{u}=\int_{\partial\mathbb{D}\setminus\overline{B_{\gamma}(1)}}|f^{*}\circ\varphi|d\mu_{u}+\int_{\overline{B_{\gamma}(1)}}|f^{*}\circ\varphi|d\mu_{u}
\end{equation*}where $\overline{B_{\gamma}(1)}=\partial\mathbb{D}\cap
B_{\gamma}(1)$ for some small $\gamma>0$. The first integral in the
sum is bounded since over
$\partial\mathbb{D}\setminus\overline{B_{\gamma}(1)}$,
$d\mu_{u}=Cd\sigma$ for some $C>0$ and $f\in
H^{1}_{u}(\mathbb{D})\subset H^{1}(\mathbb{D})$ so boundedness over
this region is guaranteed by classical $H^{p}$ theory but
\begin{equation*}
\int_{\overline{B_{\gamma}(1)}}|f^{*}\circ\varphi|d\mu_{u}=\int_{\overline{B_{\gamma}(1)}}\frac{1}{|\xi-\varphi|^{\frac{3}{4}}}d\mu_{u}
\end{equation*}and $\varphi$ has finite derivative near $\{1\}$ so
\begin{equation*}
\int_{\overline{B_{\gamma}(1)}}\frac{1}{|\xi-\varphi|^{\frac{3}{4}}}d\mu_{u}\geq
M\int_{\overline{B_{\gamma}(1)}}\frac{1}{|1-\eta|^{\frac{3}{4}}}d\mu_{u}\rightarrow\infty
\end{equation*}contradicting $C_{\varphi}$ being bounded. Hence
$\varphi(1)=1$.\\
The inequality $N^{\varphi}_{\beta}(\eta)\leq K\beta(\eta)$ is
trivially true for $\eta=1$ so we will consider the case where
$\eta\neq1$ and assume for a contradiction that
$N^{\varphi}_{\beta}(\eta_{0})> K\beta(\eta_{0})$ for all $K$, for
some $\eta_{0}\neq1$. Then from the definition of
$N^{\varphi}_{\beta}(\eta_{0})$ we see that
$\beta(\varphi(\eta_{0}))\rightarrow\infty$ which gives
$\varphi(\eta_{0})=1$. Then consider the function
$f(z)=\frac{1}{(\eta_{0}-z)^{\frac{3}{4}}}$ then by the same
argument above $f\circ\varphi\notin H^{1}_{u}(\mathbb{D})$
contradicting $C_{\varphi}$ being bounded hence
$N^{\varphi}_{\beta}(\eta)\leq K\beta(\eta)$ for all
$\eta\in\partial\mathbb{D}$ for some $K>0$.\\
$(\Leftarrow)$ Since $\varphi$ is locally univalent we can find a
countable collection of disjoint open arcs $\Omega_{j}$ with
$\sigma(\partial\mathbb{D}\setminus\bigcup\Omega_{j})=0$ and the
restriction of $\varphi$ to each $\Omega_{j}$ is univalent. Write
$\psi_{j}(w)$ for the inverse of $\varphi$ taking
$\varphi(\Omega_{j})$ onto $\Omega_{j}$. Then change of variables
formula gives
\begin{equation*}
\int_{\Omega_{j}}|f^{*}\circ\varphi||\varphi'(\xi)|\beta(\xi)d\sigma(\xi)=\int_{\varphi(\Omega_{j})}|f^{*}(w)|\beta(\psi_{j}(w))d\sigma(w)
\end{equation*}where $\xi=\psi_{j}(w)$. Now denoting the
characteristic function of $\varphi(\Omega_{j})$ by $\chi_{j}$ we
get
\begin{equation*}
\int_{\partial\mathbb{D}}|f^{*}\circ\varphi||\varphi'(\xi)|\beta(\xi)d\sigma(\xi)=\int_{\varphi(\partial\mathbb{D})}|f^{*}(w)|\left(\sum_{j\geq1}\chi_{j}(w)\beta(\psi_{j}(w))\right)d\sigma(w)
\end{equation*}
\begin{equation*}
=\int_{\varphi(\partial\mathbb{D})}|f^{*}(w)|\left(\sum_{j\geq1}\beta(\xi_{j}(w))\right)d\sigma(w)
\end{equation*}so
\begin{equation*}
\int_{\partial\mathbb{D}}|f^{*}\circ\varphi|d\mu_{u}\leq
M\int_{\partial\mathbb{D}}|f^{*}\circ\varphi||\varphi'(\xi)|\beta(\xi)d\sigma(\xi)
\end{equation*}
\begin{equation*}
=M\int_{\varphi(\partial\mathbb{D})}|f^{*}(w)|N^{\varphi}_{\beta}(w)d\sigma(w)\leq
KM \int_{\partial\mathbb{D}}|f^{*}(w)|d\sigma(w)
\end{equation*}hence $C_{\varphi}$ is a bounded operator.
\end{proof}

\vspace*{20pt}

\end{document}